\documentclass[mathpazo]{cicp}

\usepackage{amssymb}
\usepackage{bbm}
\usepackage{bm}
\usepackage[a4paper, margin=30mm]{geometry}
\usepackage{tikz}

\newcommand{\bx}{\ensuremath{\boldsymbol{x}}}
\newcommand{\by}{\ensuremath{\boldsymbol{y}}}

\newcommand{\bi}{\ensuremath{\boldsymbol{i}}}

\newcommand{\comment}[1]{}
\renewcommand{\arraystretch}{1.2}
\newcommand{\veps}{\varepsilon}
\newcommand{\bbN}{\mathbb{N}}
\newcommand{\bbR}{\mathbb{R}}



\begin{document}
\title{Better Approximations of High Dimensional Smooth
	Functions by Deep Neural Networks with Rectified Power
	Units}

\author[B.~Li, S.~Tang \& H.~Yu]{Bo Li\affil{2}\comma\affil{1}$^{,\dagger}$, Shanshan Tang\affil{3}$^{,\dagger,\ddagger}$ and
		Haijun Yu\affil{1}\comma\affil{2}\comma\corrauth
		}
\footnotetext{$^\dagger$\ The first two authors contributed equally. Author list is alphabetical.}
\footnotetext{$^\ddagger$\ The work of this author is partially done during her Ph.D. study in Academy of Mathematics and Systems Science, Chinese Academy of Sciences.}	
\address{\affilnum{1}\ NCMIS \& LSEC, Institute of Computational Mathematics and Scientific/Engineering Computing, 
	Academy of Mathematics and Systems Science, Chinese Academy of\\ Sciences, Beijing 100190, China\\
	\affilnum{2}\ 
		School of Mathematical Sciences, University of Chinese Academy of Sciences, \\ Beijing 100049, China\\
	\affilnum{3}\ 
	China Justice Big Data Institute, Beijing 100043, China}

\email{ {\tt libo1171309228@lsec.cc.ac.cn(B.~Li),\\ tangshanshan@lsec.cc.ac.cn(S.~Tang), hyu@lsec.cc.ac.cn(H.~Yu) } }

\begin{abstract}
  Deep neural networks with rectified linear units (ReLU)
are getting more and more popular due to their universal
representation power and successful applications. Some
theoretical progress regarding the approximation power
of deep ReLU network for functions
in Sobolev space and Korobov space have recently been
made by [D. Yarotsky, Neural Network, 94:103-114, 2017]
and [H. Montanelli and Q. Du, SIAM J Math. Data Sci.,
1:78-92, 2019], etc. In this paper, we show that deep
networks with rectified power units (RePU) can give
better approximations for smooth functions than deep
ReLU networks. Our analysis bases on classical
polynomial approximation theory and some efficient
algorithms proposed in this paper to convert polynomials
into deep RePU networks of optimal size with no
approximation error. Comparing to the results on ReLU
networks, the sizes of RePU networks required to
approximate functions in Sobolev space and Korobov space
with an error tolerance $\varepsilon$, by our
constructive proofs, are in general $\mathcal{O}(\log
\frac{1}{\varepsilon})$ times smaller than the sizes of
corresponding ReLU networks constructed in most of the
existing literature.  Comparing to the classical results
of Mhaskar [Mhaskar, Adv. Comput. Math. 1:61-80, 1993], 
our constructions use less number of
activation functions and numerically more stable, 
they can be served as good initials of deep RePU networks 
and further trained to break the limit of linear approximation theory. 
The functions represented by RePU networks are smooth
functions, so they naturally fit in the places where
derivatives are involved in the loss function.
\end{abstract}

\keywords{
deep neural network, high dimensional approximation,
sparse grids, rectified linear unit, rectified power
unit, rectified quadratic unit}
\ams{65D15, 65M12, 65M15}

\maketitle

\section{Introduction}

Artificial neural network(ANN), whose origin may date back
to the 1940s\cite{mcculloch_logical_1943}, is one of the
most powerful tools in the field of machine learning.
Especially, it became dominant in a lot of applications
after the seminal works by Hinton et
al.\cite{hinton_fast_2006} and Bengio et
al.\cite{bengio_greedy_2007} on efficient training of deep
neural networks (DNNs), which pack up multi-layers of units
with some nonlinear activation function.  Since then, DNNs
have greatly boosted the developments in different areas including image
classification, speech recognition, computational chemistry
and numerical solutions of high-dimensional partial
differential equations and scientific problems, etc., see
e.g. \cite{krizhevsky_imagenet_2012}\cite{hinton_deep_2012}
\cite{lecun_deep_2015}\cite{han_deep_2018}\cite{han_solving_2018}\cite{zhang_deep_2018}\cite{strofer_datadriven_2019}\cite{ma_model_2019}\cite{cavaglia_finding_2019}
to name a few.

The success of DNNs relies on two facts: 1) DNN is a
powerful tool for general function approximation; 2)
Efficient training methods are available to find minimizers
with good generalization ability.  In this paper, we focus
on the first fact.  It is known that artificial neural
networks can approximate any $C^0$ and $L^1$ functions with
any given error tolerance, using only one hidden layer (see
e.g. \cite{cybenko_approximation_1989}\cite{mhaskar_neural_1996}).
However, it was realized recently that deep networks have
better representation power( see
e.g. \cite{delalleau_shallow_2011}\cite{telgarsky_representation_2015}\cite{eldan_power_2016})
than shallow networks.  One of the commonly used activation
functions with DNN is the so called rectified linear unit
(ReLU)\cite{glorot_deep_2011}, which is defined as
$\sigma(x)=\max(0,x)$.  Telgarsky
\cite{telgarsky_representation_2015} gave a simple and
elegant construction showing that for any $k$, there exist
$k$-layer, $\mathcal{O}(1)$ wide ReLU networks on
one-dimensional data, which can express a sawtooth function
on $[0,1]$ with $\mathcal{O}(2^k)$ oscillations. Moreover,
such a rapidly oscillating function cannot be approximated
by poly$(k)$-wide ReLU networks with $o(k/\log(k))$ depth.
Following this approach, several other works proved that
deep ReLU networks have better approximation power than
shallow ReLU networks
\cite{liang_why_2016}\cite{telgarsky_benefits_2016a}\cite{yarotsky_error_2017}\cite{petersen_optimal_2018}.
In particular, for $C^\beta$-differentiable $d$-dimensional
functions, Yarotsky \cite{yarotsky_error_2017} proved that
the number of parameters needed to achieve an error
tolerance of $\varepsilon$ is
$\mathcal{O}(\varepsilon^{-\frac{d}{\beta}}\log\frac{1}{\varepsilon})$.
Petersen and Voigtlaender \cite{petersen_optimal_2018}
proved that for a class of $d$-dimensional piecewise
$C^\beta$ continuous functions with the discontinuous
interfaces being $C^\beta$ continuous also, one can
construct a ReLU neural network with
$\mathcal{O}((1+\frac{\beta}{d})\log_2 (2+\beta) )$ layers,
$\mathcal{O}(\varepsilon^{-\frac{2(d-1)}{\beta}})$ nonzero
weights to achieve $\varepsilon$-approximation. The
complexity bound is sharp. For analytic functions, E and
Wang \cite{e_exponential_2018} proved that using ReLU
networks with fixed width $d+4$, to achieve an error
tolerance of $\varepsilon$, the depth of the network depends
on $\log\frac{1}{\varepsilon}$ instead of $\varepsilon$
itself. We also want to mention that the detailed
relations between ReLU networks and linear finite elements
have been studied by He et al.\cite{he_relu_2018}. And recent
work by Opschoor, Peterson and Schwab \cite{opschoor_deep_2019} reveals the
connection between ReLU DNNs and high-order finite element
methods.

One basic fact on deep ReLU networks is that function $x^2$ can
be approximated within any error $\varepsilon>0$ by a ReLU
network having the depth, the number of weights and
computation units all of order
$\mathcal{O}(\log\frac{1}{\varepsilon})$. This fact has been
used by several groups (see e.g.
\cite{liang_why_2016}\cite{yarotsky_error_2017}) to analyze
the approximation property of general smooth functions using
ReLU networks.  In this paper, we extend the analysis to
deep neural networks using rectified power units (RePUs),
which are defined as
\begin{equation} \label{eq:RePU}
\sigma_s(x) =
\begin{cases}
x^s, & x \ge 0, \\ 0, & x<0,
\end{cases},
\quad 
s\in\bbN_0,
\end{equation}
where $\bbN_0$ denotes the set of non-negative integers.
Note that $\sigma_1$ is the commonly used ReLU function,
$\sigma_0$ is the binary step function.  We call $\sigma_2$,
$\sigma_3$ rectified quadratic unit (ReQU) and rectified
cubic unit, respectively.  We show that deep neural
networks using RePUs($s\ge 2$) as activation functions have
better approximation property for smooth functions than
those using ReLUs.  By replacing ReLU with RePU($s\ge 2$), the
functions $x$, $x^2$ and $xy$ can be exactly represented
with no approximation error using networks having just a few
nodes and nonzero weights. Based on this, we build efficient
algorithms to explicitly convert functions from a polynomial
space into RePU networks having approximately the same
number of coefficients. This allows us to obtain a better
upper bound of the best neural network approximation for
general smooth functions using classical polynomial
approximation theories.  Note that $\sigma_s$ networks have
been used in the classic works by Mhaskar and his coworkers
(see e.g. \cite{mhaskar_approximation_1993}
\cite{chui_neural_1994a}\cite{chuiDeepNetsLocal2018}), where by converting spline
approximations into $\sigma_s$ DNNs, quasi-optimal
theoretical upper bounds of function approximation are
obtained. However, their constructions of neural network are
not optimal for very smooth functions (the case $k\gg s$),
the error bound obtained is quasi-optimal due to an
extra $\log_s(k)$ factor, where $k$ is related to the
smoothness of the underlying functions.  Meanwhile no
numerically efficient and stable algorithm is presented. In
this paper, we present numerically stable and efficient
constructions of RePU network representation of polynomials
which result in RePU network of different structure and
remove the extra $\log_s(k)$ factor in the approximation
bounds. After this paper is put on arXiv, the RePU networks and our optimal network constructions are adopted by other authors, e.g., by using RePU networks instead of ReLU networks, a sharper bound for approximating holomorphic maps in
high dimension is obtained by Opschoor, Schwab and Zech \cite{OSZ19_839}.

For high dimensional problems, to be tractable, the
intrinsic dimension usually do not grow as fast as the
observation dimension. In other words, the problems have low
dimensional structure. A particular example is the class of
high-dimensional smooth functions with bounded mixed
derivatives, for which sparse grid (or hyperbolic cross)
approximation is a very popular approximation tool
\cite{smolyak_quadrature_1963}\cite{bungartz_sparse_2004}\cite{griebel_sparse_2007}\cite{shen_sparse_2010}\cite{dung_highdimensional_2018}. In
the past few decades, sparse grid method and hyperbolic
cross approximations have found many applications, such as
numerical integration and interpolation
\cite{smolyak_quadrature_1963}\cite{gerstner_numerical_1998}\cite{barthelmann_high_2000},\cite{shen_approximations_2014},
solving partial differential equations (PDE)
\cite{bungartz_adaptive_1992} \cite{lin_sparse_2001}
\cite{shen_efficient_2010}\cite{shen_efficient_2012}\cite{wang_sparse_2016}\cite{rong_nodal_2017},
computational chemistry \cite{griebel_sparse_2007}
\cite{yserentant_hyperbolic_2007}\cite{avila_solving_2013}\cite{shen_efficient_2016},
uncertainty quantification
\cite{schwab_sparse_2003}\cite{nobile_sparse_2008}\cite{nobile_adaptive_2016},
etc. For high dimensional problem, we will derive upper
bounds of RePU DNN approximation error by converting sparse
grid and hyperbolic cross spectral approximation into RePU
networks.  Our work is inspired by the recent work of
Montanelli and Du \cite{montanelli_deep_2017}, where the
connection between linear finite element sparse grids and
deep ReLU neural networks is established.  In this paper, we
approximate multivariate functions in high order Korobov
space using sparse grid Chebyshev interpolation
\cite{barthelmann_high_2000} for the interpolation problem,
and using hyperbolic cross spectral approximation for the
projection problem
\cite{shen_sparse_2010}\cite{shen_efficient_2010}. Then, we
convert the high-dimensional polynomial approximations into
ReQU networks, instead of ReLU networks, to avoid adding an
extra factor $\log\frac{1}{\varepsilon}$ in the size of the
neural network.

In summary, we find that RePU networks have the
following good properties:
\begin{itemize}
	\item RePU neural networks provide better approximations for
	sufficient smooth functions comparing to ReLU neural
	network approximations. To achieve same accuracy, the RePU
	network approximation we constructed needs less number of
	layers and smaller network size than existing ReLU neural
	network approximations. For example, for a function with
	all the partial derivatives bounded uniformly independent
	of derivative order, we can construct a ReQU network with
	no more than
	$\mathcal{O}\left(\log_2\left(\log\frac{1}{\varepsilon}\right)\right)$
	layers, and no more than {$
		\mathcal{O}\big(\frac{\log\left(1/\varepsilon\right)
		}{\log(\log 1/\varepsilon) }\big)$} nonzero weights to
	approximate it with error $\varepsilon$.  More results are
	given in Theorem \ref{thm:1d_epsilon},
	\ref{thm:MdSobolev}, \ref{thm:MdsparseS2error}.
	
	\item The functions represented by RePU($s\ge 2$) networks are smooth
	functions, so they naturally fit in the places where
	derivatives are involved in the loss function.
	
	\item Compared to other high-order differentiable activation
	functions, such as logistic, $\tanh$, softplus, sinc etc.,
	RePUs are more efficient in terms of number of arithmetic
	operations needed to evaluate, especially the rectified
	quadratic unit.
\end{itemize}
Based on the facts above, we advocate the use of deep RePU
networks in places where the functions to be approximated
are smooth.

The remaining part of this paper is organized as follows. In
Section 2, we first show how to approximate univariate
smooth functions using RePU networks by converting best
polynomial approximations into RePU networks. Then we use a
similar approach to analyze the ReQU network approximation
for multivariate functions in weighted Sobolev space in
Section 3.  After that, we show how high-dimensional
functions with sparse polynomial approximations can be well
approximated by ReQU networks in Section 4.  Some
preliminary numerical results are given in Section 5. We end
the paper by a short summary in Section 6.

\section{Approximation of univariate functions by deep RePU networks}

We first introduce some
notations related to neural networks. Denote by $\bbN$ the set of all positive integers, 
$\bbN_0 := \{ 0 \} \cup \bbN$. Given $d,L\in\bbN$, we denote a neural
network $\Phi$ with input of dimension $d$, number of layer
$L$, by a matrix-vector sequence
\begin{equation} \label{eq:NNpara}
\Phi =  \big((A_{1},b_{1}), \cdots, (A_{L},b_{L}) \big),
\end{equation}
where $N_{0}=d$, $N_{1},\cdots,N_{L}\in \bbN$,
$A_{k}$ are $N_{k}\times N_{k-1}$ matrices, and
$b_{k}\in \bbR^{N_{k}}$.
If $\Phi$ is a neural network, and
$\rho:\bbR \to \bbR$ is an arbitrary activation function, then define
\begin{equation}\label{eq:NN1}
R_{\rho}(\Phi): \bbR^{d} \rightarrow \bbR^{N_{L}},
\qquad 
R_{\rho}(\Phi)(\bx)  =  \bx_{L},
\end{equation}
where $R_{\rho}(\Phi)(\bx)$ is given as 
\begin{equation}\label{eq:NN2}
\begin{cases}
\bx_{0} := \bx, & \\
\bx_{k} := \rho(A_{k}\bx_{k-1} + b_{k}), & k=1, 2,\ldots, L-1,\\
\bx_{L} := A_{L} \bx_{L-1} + b_{L}, &
\end{cases}
\end{equation}
and
\begin{equation*}
	\rho(\by) := \left( \rho(y^1), \cdots, \rho(y^m) \right),
	\quad \forall\ \by = (y^1,\cdots,y^m) \in \bbR^{m}.
\end{equation*}
We use three quantities to measure the complexity of the
neural network: number of hidden layers, number of
nodes (i.e. activation units), and number of nonzero weights,
which are $L-1$, $\sum_{k=1}^{L-1} N_{k}$ and number of
non-zeros in $\{ (A_{k}, b_k), k=1, \ldots, L \}$,
respectively, for the neural network defined in
\eqref{eq:NNpara}. For convenience, we denote by $\# A$ the
number of nonzero components in $A$ for a given matrix or
vector $A$. For the neural network $\Phi$ defined in
\eqref{eq:NNpara}, we also denote its number of nonzero
weights as $\#\Phi:=\sum_{k=1}^L(\#A_k+ \# b_k)$.

In this paper we study the approximation property of smooth
functions by deep neural networks with RePUs as activation
units.  It seems that $\sigma_s$ networks were first used in
the classic works by Mhaskar and his coworkers (see
e.g. \cite{mhaskar_approximation_1993},
\cite{chui_neural_1994a}) to obtain high-order convergence
of neural network approximation.  $\sigma_s$ is also a
special case of piece-wise polynomial activation function,
which has been studied in \cite{
petrushev_approximation_1998} for shallow network
approximation. We also note that $\sigma_3$ has been used in
a deep Ritz method proposed recently to solve PDEs using
variational form~\cite{e_deep_2018}.

The construction of RePU networks adopted by Mhaskar
bases on the fact that a polynomial of degree $n$ in $d$
dimension can be represented by a linear combination of
$\binom{n+d}{d}$ number of monomials of the form
$\big(Ax+b\big)^n$, with each one using different affine
transform. To represent a polynomial of degree $n$ using
$\sigma_s$ neural network, they first compose $\sigma_s(x)$
for $k=\lceil \log_s n \rceil$ times, which result in
$\sigma_{s^k}(x)$. Then a neural network with one-layer
$\sigma_{s^k}(x)$ units of amount $\binom{n+d}{d}$ is capable to
accurately represent any polynomial of degree $n$. This kind
of construction give an optimal linear approximation result
for neural network using high order (the order is $s^k$)
sigmoidal activation functions. However, if regard the
constructed neural network as a $\sigma_s$ neural network,
it has $k$ hidden layers.  The corresponding linear
approximation bound is quasi-optimal due to this factor
$k$. Moreover, to find the corresponding network
coefficients to represent a given polynomial, one needs to
solve a Vandermonde-like matrix, whose condition number is
known grows geometrically (see
e.g. \cite{gautschi_optimally_2011}).  In this paper, we
propose a different approach which does not involve
any Vandermonde matrix of large size.

\subsection{Approximation by deep ReQU networks}\label{subsec:ReQU}

Our analyses relies upon the fact: $x$, $x^2$, $\ldots$, $x^s$, and $xy$ all can 
be realized by $\sigma_s$ neural networks with a few number
of coefficients.  We first give the result for $s=2$ case.
\begin{lemma}\label{lem:s2basic}
	For any $x, y\in\bbR$, the following identities hold:
	\begin{align}
		x^2 & = \beta^{T}_{2} \sigma_{2}(\omega_{2}  x), \label{eq:s2x2} \\
		x & = \beta_{1}^T \sigma_2(\omega_1 x +\gamma_1),\label{eq:s2x}\\
		xy & = \beta_{1}^{T} \sigma_{2}(\omega_{1} x +\gamma_{1}y), \label{eq:s2xy}
	\end{align}	
	where
	\begin{equation} \label{eq:const_vecs}
	\beta_2=[1,1]^T, \
	\omega_2=[1,-1]^T, \ 
	\beta_1=\frac14[1,1,-1,-1]^T, \
	\omega_1=[1,-1,1,-1]^T, \
	\gamma_1=[1,-1,-1,1]^T.
	\end{equation}
	If both $x$ and $y$ are non-negative, the formula for
	$x^2$ and $xy$ can be simplified to the following form
	\begin{align}
		x^2 &= \sigma_2(x), \label{eq:s2x2pp}\\
		x y &= \beta_3^T \sigma_2(\omega_3 x +\gamma_{2}y), \label{eq:s2xypp}
	\end{align}
	where 
	\begin{equation}\label{eq:const_vecs_pp}
	\beta_{3}=\frac14[1, -1,-1]^T,\quad
	\omega_{3}=[1,1,-1]^{T},\quad
	\gamma_{2}=[1,-1,1]^{T}.
	\end{equation}
\end{lemma}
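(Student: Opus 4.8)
The plan is to verify each of the five identities by direct computation, exploiting the piecewise definition of $\sigma_2$. Since every identity asserts that a specific polynomial equals a linear combination of $\sigma_2$ applied to affine images of the inputs, and since $\sigma_2$ is defined piecewise, the natural strategy is to expand the right-hand side on each sign-region of its arguments and check that it collapses to the claimed left-hand side. I expect no single step to be a genuine obstacle; the work is elementary, and the only care needed is bookkeeping across the different cases.

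First I would dispatch the one-variable identities \eqref{eq:s2x2} and \eqref{eq:s2x}. For \eqref{eq:s2x2}, writing out $\beta_2^T\sigma_2(\omega_2 x) = \sigma_2(x) + \sigma_2(-x)$ and using $\sigma_2(x)+\sigma_2(-x)=x^2$ for all real $x$ (since exactly one of $x,-x$ is nonnegative, contributing its square, while the other contributes $0$, and at $x=0$ both vanish) gives the result immediately. For \eqref{eq:s2x}, I would compute $\tfrac14\big[\sigma_2(x+1)-\sigma_2(-x+1)+\sigma_2(x-1)-\sigma_2(-x-1)\big]$; expanding over the regions determined by the signs of $x\pm 1$ (equivalently $|x|<1$ versus $|x|\ge 1$) and using the identity $\sigma_2(t)-\sigma_2(-t)=t|t|$ together with cancellation of the quadratic terms, one checks the linear term $x$ survives. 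The cleanest bookkeeping here is to split on $x\ge 0$ and $x<0$ and evaluate the four $\sigma_2$ terms explicitly in each.

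Next I would treat the bilinear identity \eqref{eq:s2xy}. The key observation is the polarization-type identity $xy = \tfrac14\big[(x+y)^2 - (x-y)^2\big]$, and the construction in \eqref{eq:const_vecs} is engineered so that $\beta_1^T\sigma_2(\omega_1 x + \gamma_1 y)$ produces exactly $\tfrac14[\sigma_2(x+y)+\sigma_2(-x-y)] - \tfrac14[\sigma_2(x-y)+\sigma_2(-x+y)]$, and each bracket is $(x+y)^2$ respectively $(x-y)^2$ by the reasoning already established for \eqref{eq:s2x2}. Thus \eqref{eq:s2xy} reduces to \eqref{eq:s2x2} applied to the two arguments $x\pm y$, which makes it free of casework once \eqref{eq:s2x2} is in hand.

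Finally, the nonnegative-variable simplifications are the easiest. Identity \eqref{eq:s2x2pp} is immediate from the definition $\sigma_2(x)=x^2$ when $x\ge 0$. For \eqref{eq:s2xypp}, I would again lean on a polarization identity, here $xy = \tfrac14\big[(x+y)^2 - (x-y)^2\big]$, noting that $(x+y)^2=\sigma_2(x+y)$ automatically since $x+y\ge 0$, while $(x-y)^2 = \sigma_2(x-y)+\sigma_2(-(x-y))$ covers both signs of $x-y$; matching these against $\tfrac14[\sigma_2(x+y)-\sigma_2(x-y)-\sigma_2(-x+y)]$ confirms the stated $\beta_3,\omega_3,\gamma_2$. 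The main thing to watch throughout is simply the sign conventions in the affine arguments, so I would organize the verification as a short table of the relevant regions rather than by repeated ad hoc expansion.
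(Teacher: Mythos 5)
Your overall strategy is exactly the paper's: the printed proof is the single line ``All the identities can be obtained by straightforward calculations,'' and your plan --- the identity $\sigma_2(t)+\sigma_2(-t)=t^2$ for \eqref{eq:s2x2}, polarization $xy=\tfrac14[(x+y)^2-(x-y)^2]$ for \eqref{eq:s2xy} and \eqref{eq:s2xypp}, and the trivial observation for \eqref{eq:s2x2pp} --- is the intended calculation, so there is no methodological divergence to report.

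One slip worth fixing: your explicit expansion for \eqref{eq:s2x} has the signs paired incorrectly. With $\beta_1=\tfrac14[1,1,-1,-1]^T$, $\omega_1=[1,-1,1,-1]^T$, $\gamma_1=[1,-1,-1,1]^T$, the argument vector is $[x+1,\,-x-1,\,x-1,\,-x+1]^T$, so
$\beta_1^T\sigma_2(\omega_1 x+\gamma_1)=\tfrac14\bigl[\sigma_2(x+1)+\sigma_2(-x-1)\bigr]-\tfrac14\bigl[\sigma_2(x-1)+\sigma_2(-x+1)\bigr]=\tfrac14\bigl[(x+1)^2-(x-1)^2\bigr]=x$,
i.e.\ it is precisely your polarization argument for \eqref{eq:s2xy} evaluated at $y=1$, needing only $\sigma_2(t)+\sigma_2(-t)=t^2$ and no case split. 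The grouping you wrote, $\tfrac14[\sigma_2(x+1)-\sigma_2(-x+1)+\sigma_2(x-1)-\sigma_2(-x-1)]$, instead equals $\tfrac14[(x+1)|x+1|+(x-1)|x-1|]$ via $\sigma_2(t)-\sigma_2(-t)=t|t|$, which is $x$ only on $|x|\le 1$ and fails for $|x|>1$; so the verification of \eqref{eq:s2x} as literally written would not go through. Since your treatment of \eqref{eq:s2xy} is correct and \eqref{eq:s2x} is its $y=1$ specialization (as the paper itself remarks), this is a transcription error rather than a gap in the idea, but it should be corrected.
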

\begin{proof}
	All the identities can be obtained by straightforward calculations.
\end{proof}

Note that the realizations given in Lemma \ref{lem:s2basic}
are not unique. For example, to realize
$id_{\mathbb{R}}(x)=x$, we may use
\begin{equation*}
	x = (x+1/2)^2- x^2 - 1/4 = \beta_2^T \sigma_2
	(\omega_2(x+1/2)) - \beta_2^T \sigma_2(\omega_2 x) -1/4,
\end{equation*}
for general $x\in \bbR$, and use
\begin{equation*}
	x = (x+1/2)^2- x^2 - 1/4 = \sigma_2 (x+1/2) - \sigma_2(x)
	-1/4,
\end{equation*}
for non-negative $x$. To have a neat presentation, we will
use \eqref{eq:s2x2}-\eqref{eq:const_vecs_pp} throughout this
paper even though simpler realizations may exist for some
special cases.  We notice that the realization of the
identity map $id_{\mathbb{R}}(x)$ given in (\ref{eq:s2x}) is
a special case of $(\ref{eq:s2xy})$ with $y=1$. Furthermore,
the constant function $1$ can be represented by a trivial
network with $L=1$ and $A_1=0, b_1=1$ .

\begin{remark}\label{rem:x2ReLU}
	Notice that in
	\cite{yarotsky_error_2017,petersen_optimal_2018,montanelli_deep_2017},
	all the analyses rely on the fact that $x^2$ can be
	approximated to an error tolerance $\varepsilon$ by a
	deep ReLU networks of complexity $\mathcal{O}(\log
	\frac{1}{\varepsilon})$.  In our approach, by replacing
	ReLU with ReQU, $x^2$ is represented with no error using
	a ReQU network with only one hidden layer and 2 hidden
	neurons. This simple replacement greatly simplifies the
	proofs of some existing deep neural network
	approximation bounds, improves the approximation rate and
	meanwhile reduces the network complexity.
\end{remark}

\subsubsection{Optimal realizations of polynomials by deep ReQU networks with no error}

The basic property of $\sigma_2$ given in Lemma
\ref{lem:s2basic} can be used to construct {deep} neural
network representations of monomials and polynomials. We
first show that the monomial $x^n,n>2$ can be represented
exactly by deep ReQU networks of finite size but not shallow
ReQU networks.

\begin{theorem}\label{thm:s2xn1d}
	A) The monomial $x^n, n\in \bbN$ defined on $\bbR$ can be
	represented exactly by a $\sigma_2$ network.  The number
	of network layers, number of hidden nodes and number of nonzero weights
	required to realize $x^n$ are at most $\lfloor\log_2
	n\rfloor+2$, $5 \lfloor\log_2 n\rfloor+5$ and $25
	\lfloor\log_2 n\rfloor + 14$, respectively. Here $\lfloor
	x \rfloor$ represents the largest integer not exceeding
	$x$ for $x\in\bbR$.
	
	B) For any $n>2$, $x^n$ can not be represented exactly by
	any ReQU network with less than $\lceil \log_2 n \rceil$
	hidden layers.
\end{theorem}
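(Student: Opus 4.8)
My plan is to establish Part A by an explicit recursive construction built on the exact one-layer realizations of $x^2$, $xy$ and $\mathrm{id}_{\bbR}$ from Lemma~\ref{lem:s2basic}, and to establish the lower bound in Part B by tracking how the local polynomial degree of a ReQU network grows with depth.

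For Part A, the obstruction to a short network is that a single $\sigma_2$ layer can exactly realize a square or a bilinear product (by \eqref{eq:s2x2} and \eqref{eq:s2xy}) but cannot raise a degree-$m$ quantity to degree $2m+1$ in one step; a plain square-and-multiply scheme would therefore spend an extra layer on every odd binary digit of $n$. I would avoid this by carrying a \emph{pair} of consecutive powers as the hidden state. Writing $n=(b_k\cdots b_0)_2$ with $b_k=1$ and $k=\lfloor\log_2 n\rfloor$, and $n_j=(b_k\cdots b_j)_2$ so that $n_{j-1}=2n_j+b_{j-1}$, I keep $(a_j,c_j):=(x^{n_j},x^{n_j+1})$. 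The state update is then closed under one squaring and one product: if $b_{j-1}=0$ then $(a_{j-1},c_{j-1})=(a_j^2,\,a_jc_j)$, and if $b_{j-1}=1$ then $(a_{j-1},c_{j-1})=(a_jc_j,\,c_j^2)$. Each update is one $\sigma_2$ layer, and since $n_k=1$, $n_0=n$, after the $k$ updates the first component is exactly $x^n$. Prepending one layer to form the initial state $(a_k,c_k)=(x,x^2)$ and appending an affine output layer gives $\lfloor\log_2 n\rfloor+1$ hidden layers, i.e. $\lfloor\log_2 n\rfloor+2$ layers in total.

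The node and weight bounds then follow from counting the fixed realizations of Lemma~\ref{lem:s2basic} used in each layer. The product $a_jc_j$ I realize through the four units $\sigma_2(\pm(a_j+c_j)),\sigma_2(\pm(a_j-c_j))$, which also deliver $(a_j+c_j)^2$ and $(a_j-c_j)^2$; because exactly one of $n_j,n_j+1$ is even, one of $a_j,c_j$ is a nonnegative even power whose square costs a single unit via \eqref{eq:s2x2pp}, so the remaining square needed for the update is an affine combination of the quantities already at hand. This keeps every update layer at five units, and a similar tally of the incoming weights and biases gives the stated $5\lfloor\log_2 n\rfloor+5$ nodes and $25\lfloor\log_2 n\rfloor+14$ nonzero weights; I regard this as routine bookkeeping rather than a conceptual step, with slack absorbed by the fact that the final update only needs one component.

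For Part B I would argue via the local polynomial degree. On any region of $\bbR$ on which every hidden unit receives a pre-activation of fixed sign, each $\sigma_2$ acts simply as squaring, so the network realizes an ordinary polynomial there; by induction on layers, an affine recombination preserves degree and one $\sigma_2$ at most doubles it, so after $h$ hidden layers the local degree is at most $2^h$. If such a network equalled $x^n$ on all of $\bbR$, then on one nondegenerate interval it would coincide with the degree-$n$ polynomial $x^n$, forcing $n\le 2^h$ and hence $h\ge\lceil\log_2 n\rceil$. I expect the one point needing care—the main obstacle—to be making this degree estimate uniform: verifying that the ``degree at most doubles per layer'' bound holds simultaneously on every activation region and is not disturbed by the affine maps connecting layers. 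Once that is in place, the counting conclusion, and with it the impossibility below $\lceil\log_2 n\rceil$ hidden layers, is immediate.
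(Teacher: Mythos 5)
Your proposal is correct in both parts, and it reaches the paper's bounds, but your Part A uses a genuinely different recursion. The paper also runs a depth-$(\lfloor\log_2 n\rfloor+1)$ binary square-and-multiply with a two-component state, but it processes the bits from the \emph{least} significant end, carrying $(\xi^{(1)}_k,\xi^{(2)}_k)=(x^{2^k},\,x^{\sum_{j<k}a_j2^j})$: each layer squares the first component for one unit (using $x^{2^k}\ge 0$ and \eqref{eq:s2x2pp}) and performs a conditional multiply $(\xi^{(1)})^{a_k}\xi^{(2)}=(c_k^++c_k^-\xi^{(1)})\,\xi^{(2)}$ through the four-unit product gadget, with one extra layer at the end to multiply the two components. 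Your most-significant-bit-first ladder with invariant $(x^{n_j},x^{n_j+1})$ instead needs a genuine product \emph{and} a genuine square in every update, and you recover the five-unit width by a different nonnegativity observation — one of two consecutive powers is even, so its square costs a single unit and the other square is an affine combination of the product gadget's outputs. Both constructions have the same depth, the same width profile, and (up to the weight tally you defer as bookkeeping, which is where the exact constant $25\lfloor\log_2 n\rfloor+14$ is earned) the same size; the paper's variant has the mild advantage that the architecture is identical for all $n$ of the same bit length, with only the weights $c_k^{\pm}$ depending on the bits, while yours dispenses with the separate final multiplication layer. Your Part B is the paper's argument essentially verbatim — a local-degree count showing $h$ hidden layers yield piecewise polynomials of degree at most $2^h$ — and the uniformity issue you flag is not a real obstacle: each pre-activation is, on every piece inherited from the previous layers, a univariate polynomial with finitely many sign changes, so $\bbR$ decomposes into finitely many intervals on each of which the degree bound holds, and comparing with $x^n$ on any one nondegenerate interval forces $n\le 2^h$.
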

\begin{proof}
	1) We first prove part B. For a one-layer ReQU network
	with $N$ activation units, one input and one output, the
	function represented by the network can be written as
	\begin{align*}
		f_N(x) = \sum_{k=1}^N c_k \sigma_2( a_k x + b_k) + d,
	\end{align*}
	where $d$ and $a_k, b_k, c_k$, $k=1,\ldots, N$ are the
	parameters of the network. Obviously, $f_N$ is a piecewise
	polynomial with at most $N+1$ pieces in the intervals
	divided by distinct points of $x_k = -b_k/a_k$, $k=1,
	\ldots, N$(suppose the points are in ascending order). In
	each piece, $f_N$ is a polynomials of degree 2. Since a
	polynomial of degree at most 2 composed with another
	polynomial of degree at most 2 produces a polynomial of
	degree at most 4, so a ReQU network with two hidden layers
	can only represent piecewise polynomials of degree at most
	4. By induction, a ReQU network with $m$ hidden layers can
	only represent piecewise polynomials of degree at most
	$2^m$. So, with $m< \lceil \log_2 n \rceil$, a ReQU
	network with $m$ hidden layers can't exactly represent
	$x^n$.
	
	2) Now we give a constructive proof for part A.  We first express $n$ in binary
	system as follows:
	\begin{equation*}
		n = a_m\cdot 2^m + a_{m-1}\cdot 2^{m-1} + \cdots + a_1\cdot 2 + a_0,
	\end{equation*}
	where $a_{j}\in\{0,1\}$ for $j=0,1,...,m-1$, $a_m=1$, and
	$m=\lfloor\log_2 n\rfloor$. Then
	\begin{align*}
		x^n 
		= x^{2^m}\cdot x^{\sum\limits_{j=0}^{m-1}a_j2^{j}}.
	\end{align*}
	Introducing intermediate variables 
	\begin{equation*}\label{eq:xidef1}
		\xi_k^{(1)}:=x^{2^k},\qquad
		\xi_k^{(2)}:=x^{\sum\limits_{j=0}^{k-1}a_j 2^j}, 
		\qquad\text{for}\ 
		1\le k\le m,
	\end{equation*} then 
	\begin{equation} \label{eq:xiXnrel}
	x^n=\xi_m^{(1)}\xi_{m}^{(2)}.
	\end{equation}
	We use the iteration scheme
	\begin{align}\label{eq:xirec}
		\begin{cases}
			\xi_1^{(1)}=x^2,&\\
			\xi_1^{(2)}=x^{a_0},&\\
		\end{cases}
		& 	\quad 	\text{and} 	\quad 
		\begin{cases}
			\xi_{k}^{(1)} = (\xi_{k-1}^{(1)})^2, & \\
			\xi_{k}^{(2)} = (\xi_{k-1}^{(1)})^{a_{k-1}}
			\xi_{k-1}^{(2)}, &\\
		\end{cases}
		\text{for}\ 2\le k\le m,
	\end{align}
	and \eqref{eq:xiXnrel} to realize $x^n$. The outline of
	the realization is demonstrated in Fig. \ref{fig:1}.  In
	each iteration step, we need to realize two basic
	operations: $(x)^2$ and $(x)^{a_k}y$, where $x, y$ stands
	for $\xi_{k-1}^{(1)}, \xi_{k-1}^{(2)}$ respectively. Note
	that $(x)^2$ can be realized by Eqs. \eqref{eq:s2x2}
	and \eqref{eq:s2x2pp} in Lemma $\ref{lem:s2basic}$. For
	the operation $(x)^{a_j}y$, since $a_j\in \{ 0, 1\}$, by
	\eqref{eq:s2xy}, we have
	\begin{equation}
	x^{a_j} y
	= \Big( \dfrac{1+(-1)^{a_j}}{2}  + \dfrac{1-(-1)^{a_j}}{2} x \Big) y
	= 
	\beta^{T}_{1} \sigma_{2} \left( \omega_{1} (c^{+}_{j}  
	+  c^{-}_{j}  x) + \gamma_{1} y \right),
	\end{equation}
	where $c^{\pm}_{j} := \frac{ 1 \pm (-1)^{a_{j} } }{2}$.
	
	Now we describe the procedure in detail. 
	For $n\ge 3$, we follow the idea given
	in Eq. \eqref{eq:xirec} and Fig. $\ref{fig:1}$. The
	function $x^n$ is realized in $m+1$ steps, which are
	discussed below.
	\begin{figure}[t!]
		\includegraphics[width=0.99\textwidth]{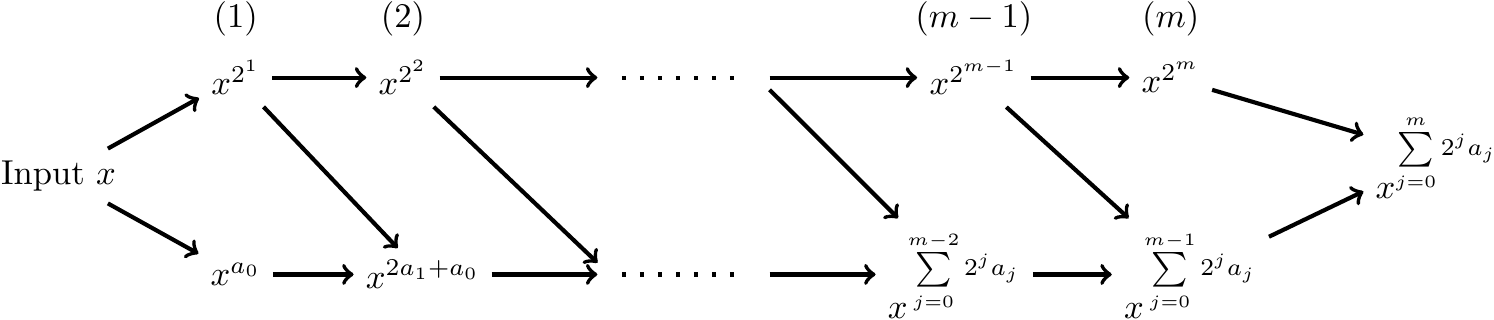}
		\caption{\label{fig:1} A schematic diagram for $\sigma_2$ network
			realization of $x^{n}$.  $(j)$ represents the $j$-th
			hidden layer for intermediate variables.}
	\end{figure}
	
	\begin{enumerate}
		\item[1)] In Step $1$, we calculate
		\begin{align}
			& \xi_1^{(1)} = x^2 = \beta_2^T \sigma_2(\omega_{2} x) \ge 0,\\
			& \xi_1^{(2)} = x^{a_0} = c^{+}_{0}+c^{-}_{0} x
			= c_{0}^{+} + c_{0}^{-} \beta^{T}_{1}
			\sigma_2 \left( \omega_{1}x + \gamma_{1} \right),
		\end{align}
		which implies the first layer output of the neural network is:
		\begin{align}\label{eq:out1}
			\bx_1 = \sigma_2(A_1x+b_1),
			\quad 
			\mbox{where}
			\quad
			{A}_{1}	= 
			\begin{bmatrix}
				\omega_{2}\\
				\omega_{1} 
			\end{bmatrix}_{6\times 1}, 
			\quad {b}_1 =
			\begin{bmatrix}
				\bm{0}\\
				\gamma_{1}
			\end{bmatrix}_{6\times 1},
		\end{align}
		and
		\begin{align}\label{eq:out1iv} 
			\begin{bmatrix}
				\xi_1^{(1)}\\
				\xi_1^{(2)}
			\end{bmatrix}
			= A_{20}\bx_1 + b_{20},
			\quad \text{where}\quad 
			A_{20} =
			\begin{bmatrix}
				\beta_2^T & \bm{0}\\
				\bm{0}& c_0^{-}\beta^{T}_{1}
			\end{bmatrix}_{2\times 6}, 
			\quad b_{20} =
			\begin{bmatrix}
				0\\
				c_0^{+}
			\end{bmatrix}_{2\times 1}.
		\end{align}
		Since $\# \omega_1 = 4$, $\# \omega_2 = 2$, $\# \gamma_1 =
		4$, it is easy to see that the number of nodes in the
		first hidden layer is $6$, and the number of non-zeros is:
		$\#{A}_{1} + \#{b}_{1}=10$.
		
		\item[2)] In Step $j$, $2\le j\le m$, we calculate
		\begin{align}
			\xi_{j}^{(1)}
			&  = (\xi_{j-1}^{(1)})^2
			= \sigma_2(\xi_{j-1}^{(1)})\ge 0,\\
			\xi_{j}^{(2)}
			&  =  (\xi_{j-1}^{(1)})^{a_{j-1}}  \xi_{j-1}^{(2)} 
			= ( c_{j-1}^{+} + c_{j-1}^{-} \xi_{j-1}^{(1)} ) \xi_{j-1}^{(2)} \nonumber\\
			& = \beta^{T}_{1}
			\sigma_2 \left(\omega_{1}( c_{j-1}^{+} + c_{j-1}^{-} \xi_{j-1}^{(1)} ) + \gamma_{1}\xi_{j-1}^{(2)}\right),
		\end{align}
		which suggest the $j$-th layer output of the neural
		network is:
		\begin{align}
			\bx_{j} 
			= \sigma_{2}
			\left( A_{j1}
			\begin{bmatrix}
				\xi_{j-1}^{(1)}\\
				\xi_{j-1}^{(2)}
			\end{bmatrix}
			+ b_{j1} \right),
			\quad \ 
			A_{j1}
			= \begin{bmatrix}
				1 & 0 \\
				c_{j-1}^{-}\omega_1 & \gamma_{1} \\
			\end{bmatrix}_{5\times 2}, 
			\ b_{j1} 
			=  \begin{bmatrix}
				0\\
				c_{j-1}^{+}\omega_1\\
			\end{bmatrix}_{5\times 1},  \nonumber
		\end{align}
		and
		\begin{align}\label{Aj0-bj0}
			\begin{bmatrix}
				\xi_{j}^{(1)}\\
				\xi_{j}^{(2)}
			\end{bmatrix}
			=  A_{j+1,0}\bx_{j}  +  b_{j+1,0},
			\quad \text{where} \
			A_{j+1,0} =
			\begin{bmatrix}
				1 &  \bm{0}\\
				0 &  \beta^{T}_{1} \\
			\end{bmatrix}_{2\times 5}, 
			\  b_{j+1,0} = \bm{0}.
		\end{align}
		We have
		\begin{align}
			{A}_j=A_{j1}A_{j0},\quad {b}_j=A_{j1}b_{j0}+b_{j1}, \quad
			j=2,\ldots, m.
		\end{align}
		By a direct calculation, we find that the number of nodes
		in Layer $j$ is $5\ (j=2,\ldots,m)$, and the number of
		non-zeros in Layer $j$, $j=3,\ldots,m$ is $\#{A}_{j} +
		\#{b}_{j} \leq 21+4=25$. For $j=2$, $\#{A}_{2} + \#{b}_{2}
		=26+4=30$.
		
		\item[3)] In Step $m+1$, we calculate
		\begin{equation}
		x^n
		= \xi_{m}^{(1)}\xi_{m}^{(2)} 
		= \beta^{T}_{1}\sigma_2
		\left(\omega_{1}\xi_{m}^{(1)}+\gamma_{1}\xi_{m}^{(2)}\right),
		\end{equation} 
		which implies
		\begin{align}
			\bx_{m+1}
			= \sigma_2\left(A_{m+1,1}
			\begin{bmatrix}
				\xi_{m}^{(1)}\\
				\xi_{m}^{(2)}
			\end{bmatrix}
			\right),
			\quad\mbox{where}\quad
			A_{m+1,1}
			= [\omega_{1}\ \gamma_{1}]_{4\times 2}.
		\end{align}
		So we get
		$\bm{x}_{m+1}=\sigma_{2}(A_{m+1}\bm{x}_{m}+b_{m+1})$, with
		\begin{equation}
		{A}_{m+1}=A_{m+1,1}A_{m+1,0},\quad
		{b}_{m+1}=\bm{0},
		\end{equation}
		and
		\begin{equation}
		\bx_{m+2} := x^n = \beta_1^T \bx_{m+1}.
		\end{equation}
		By a direct calculation, we get the number of nodes in
		Layer $m+1$ is $4$, the number of non-zero weights is
		$\#{A}_{m+1}=20$.
		
		For Layer $m+2$, which is the output layer of the overall
		network, ${A}_{m+2}=\beta^{T}_{1}$, and
		${b}_{m+2}=0$. There are no activation units and the number of
		nonzero weights is $\#A_{m+2} = 4$.
	\end{enumerate}
	
	The ReQU network we just built has $m+2$ layers. The total number of
	nodes is $6+5(m-1)+4=5m+5$. The total number of nonzero weights is
	$10+30+25(m-2)+20+4 = 25m+14$. Combining the cases $n=1,2$, we
	reach to the desired conclusion.
\end{proof}

Now we consider how to convert univariate polynomials into
$\sigma_2$ networks. If we directly apply Theorem
\ref{thm:s2xn1d} to each monomial term in a polynomial and
then combine them together, one would obtain a network of
depth $\mathcal{O}(\log_2 n)$ and size
$\mathcal{O}(n\log_2 n)$, which is not optimal.  We provide
here two algorithms to convert a polynomial into a ReQU
network of same scale, i.e. without the extra $\log_2 n$
factor. The first algorithm is a direct implementation of Horner's method (also known as Qin Jiushao's algorithm in China):
\begin{align}
	f(x) & = a_0  + a_1 x + a_2 x^2 + a_3 x^3 + \cdots + a_n x^n \nonumber\\
	& = a_0 + x\left(a_1 + x\left(a_2 + x\big(a_3+ \cdots +
	x(a_{n-1} + x a_n) \big) \right)\right). \label{eq:horner}
\end{align}
To describe the algorithm iteratively, we introduce the
following intermediate variables
\begin{equation*}
	y_k = \begin{cases}
		a_{n-1} + x a_n,  & k=n, \\
		a_{k-1} + x y_{k+1}, & k= n-1, n-2, \ldots, 1.
	\end{cases}
\end{equation*} 
Then we have $y_1 = f(x)$. By implementing of $y_k$ for
each $k$, using the realizations formula given in Lemma
\ref{lem:s2basic}, and stacking the implementations of $n$
steps up, we obtain a $\sigma_2$ neural network with
$\mathcal{O}(n)$ layers and  where each layer has a constant width
independent of $n$.

The second construction given in the following theorem can
achieve same representation power with same amount of
weights but much less layers.

\begin{theorem}\label{thm:s2Pn1d}
	If $f(x)$ is a polynomial of degree $n$ on $\bbR$ , then
	it can be represented exactly by a $\sigma_2$ neural
	network with $\lfloor\log_{2}n\rfloor+1$ hidden layers,
	and the numbers of nodes and nonzero weights are both of order
	$\mathcal{O}(n)$.  To be more precise, the number of nodes
	is bounded by $9n$, and number of nonzero weights is
	bounded by $61 n$.
\end{theorem}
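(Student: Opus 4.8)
The plan is to replace the serial Horner recursion \eqref{eq:horner}, whose depth is $\mathcal{O}(n)$, by a balanced binary divide-and-conquer evaluation scheme (essentially Estrin's scheme), which collapses the depth to $\mathcal{O}(\log_2 n)$ while keeping the number of multiplications linear in $n$. Concretely, I would group the coefficients into consecutive pairs to form the degree-one blocks $P_k^{(0)}(x) = a_{2k} + a_{2k+1}x$, and then recursively combine neighbouring blocks by
\[
P_k^{(j)}(x) = P_{2k}^{(j-1)}(x) + x^{2^{j}}\,P_{2k+1}^{(j-1)}(x), \qquad j = 1,2,\ldots,m,
\]
where $m = \lfloor\log_2 n\rfloor$. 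Since $P_k^{(j)}$ has degree $2^{j+1}-1$, after $m$ combining rounds the single surviving block satisfies $\deg P_0^{(m)} = 2^{m+1}-1 \ge n$ and $P_0^{(m)}(x) = f(x)$.

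Next I would turn this recursion into a $\sigma_2$ network in which the $j$-th combining round is carried out by a single hidden layer, using only the realizations of Lemma \ref{lem:s2basic}. First, the powers $x^{2^j}$ are produced by repeated squaring, $x^{2^{j}} = \sigma_2(x^{2^{j-1}})$, which is legitimate because $x^{2^{j-1}}\ge 0$ for $j\ge 1$, so each new power costs a single activation unit by \eqref{eq:s2x2pp}. Second, each product $x^{2^j}P_{2k+1}^{(j-1)}$ is formed by the bilinear realization \eqref{eq:s2xy}, $uv = \beta_1^{T}\sigma_2(\omega_1 u + \gamma_1 v)$, using a constant number of units, while the additive term $P_{2k}^{(j-1)}$ is carried by the identity realization \eqref{eq:s2x}. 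The key point is that the value $P_k^{(j)}$ need never appear as an explicit node: it is a fixed linear combination (through $\beta_1$) of the activations produced in round $j$, so the additions and the formation of $P_k^{(j)}$ can all be folded into the affine map $(A_{j+1},b_{j+1})$ feeding the next layer, exactly as the intermediate variables $\xi^{(1)},\xi^{(2)}$ were handled in the proof of Theorem \ref{thm:s2xn1d}.

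It then remains to count. The depth is $m+1 = \lfloor\log_2 n\rfloor + 1$ hidden layers, one per combining round, with the concluding $\beta_1$-combination realized by the final activation-free output map. For the size, the number of blocks at level $j$ is about $(n+1)/2^{j+1}$, each requiring only a bounded number of activation units and nonzero weights, so the totals are controlled by the geometric series $\sum_{j\ge 1}(n+1)/2^{j+1} \le (n+1)/2$; tracking the explicit per-block costs of \eqref{eq:s2x2pp}, \eqref{eq:s2xy} and the carried identity \eqref{eq:s2x} then yields the stated bounds of $9n$ nodes and $61n$ nonzero weights.

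I expect the main obstacle to be bookkeeping rather than conceptual. Pinning down the exact constants $9$ and $61$ requires keeping careful track, layer by layer, of (i) how many units are spent on the bilinear products versus on carrying the additive blocks $P_{2k}^{(j-1)}$ through each layer; (ii) the incomplete blocks at the top levels when $n+1$ is not a power of two, together with the small cases $n=1,2$; and (iii) collapsing the two affine maps surrounding each activation (the output combination $\beta_1$ of one layer and the input combination forming the next $P^{(j)}$) into single matrices $(A_j,b_j)$ so that no weights are double counted. A clean constant will likely require selecting at each node the cheapest applicable realization from Lemma \ref{lem:s2basic}, in particular \eqref{eq:s2x2pp} and \eqref{eq:s2xypp} whenever the relevant quantities are known to be non-negative.
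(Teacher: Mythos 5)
Your proposal is correct and follows essentially the same route as the paper: the paper's proof is exactly this balanced (Estrin-type) pairing of coefficients, with $\xi_{k,j}=\xi_{k-1,0}\,\xi_{k-1,2j}+\xi_{k-1,2j-1}$ and the powers $x^{2^k}$ obtained by repeated squaring $\sigma_2(\xi_{k-1,0})$, the products realized via \eqref{eq:s2xy}, and the intermediate linear combinations folded into the affine maps $(A_j,b_j)$. The only remaining work is the explicit per-layer count that the paper carries out to obtain the constants $9$ and $61$.
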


\begin{proof}
	Assume $f(x)=\sum_{j=0}^n a_j x^j$, $a_n\neq 0$.
	We first use an example with $n=15$ to demonstrate the
	process of network construction as follows:
	\begin{align}
		f(x)
		& = a_{15}x^{15}+a_{14}x^{14}+\cdots+a_8x^8+ a_7x^7+a_6x^6+\cdots+a_1x+a_0 \nonumber \\
		& = 
		\underbrace{x^8}\limits_{\xi_{3,0}} \underbrace{
			\bigg\{\underbrace{x^4}\limits_{\xi_{2,0}}
			\underbrace{ \bigg[
				\underbrace{x^2}\limits_{\xi_{1,0}}
				\underbrace{(a_{15}x+a_{14})}\limits_{\xi_{1,8}}
				+\underbrace{(a_{13}x+a_{12})}\limits_{\xi_{1,7}}
				\bigg] }\limits_{\xi_{2,4}} 
			+ 
			\underbrace{
				\bigg[
				x^2\underbrace{(a_{11}x+a_{10})}\limits_{\xi_{1,6}}
				+\underbrace{(a_9x+a_8)}\limits_{\xi_{1,5}}\bigg] }\limits_{\xi_{2,3}} \bigg\} 
		}\limits_{\xi_{3,2}} \nonumber\\
		&\qquad + \underbrace{
			\bigg\{ x^4\underbrace{\bigg[x^2\underbrace{(a_7x+a_6)}\limits_{\xi_{1,4}}
				+ \underbrace{(a_5x+a_4)}\limits_{\xi_{1,3}}\bigg]}\limits_{\xi_{2,2}}
			+ \underbrace{\bigg[x^2\underbrace{(a_3x+a_2)}\limits_{\xi_{1,2}}
				+ \underbrace{(a_1x+a_0)}\limits_{\xi_{1,1}}\bigg]}\limits_{\xi_{2,1}} \bigg\}
		}\limits_{\xi_{3,1}}.  
	\end{align}
	Here $\xi_{1,j_{1}}, j_{1}=0, 1,2,\cdots,8$,
	$\xi_{2,j_2}, j_{2}=0, 1,2,\cdots,4$, and $\xi_{3,j_3}$,
	$j_3=0, 1,2$ are the intermediate variable output of Layer
	$1$, $2$, $3$, respectively. The final output is
	$f(x)=\xi_{3,0} \xi_{3,2} + \xi_{3,1}$.
	
	We first describe the construction for the case $n \ge 4$
	here.
	
	Denote $m=\lfloor\log_2 n\rfloor$. We first extend $f(x)$
	to include monomials up to degree $2^{m+1}-1$ by zero
	padding:
	\begin{equation}
	f(x) := \sum_{j=0}^{2^{m+1}-1}a_j x^j,
	\qquad \text{where} \quad
	a_j = 
	0,   \quad \text{for}\ n+1\le j\le 2^{m+1}-1.
	\end{equation}
	The process of building a $\sigma_2$ network to represent
	$f(x)$ is similar to the case $n=15$. We give details
	below.
	
	\begin{enumerate}
		\item[1)] The output of Layer $1$ intermediate variables
		are:
		\begin{align} 
			\xi_{1,j}  & = a_{2j-1}x+a_{2j-2}
			= a_{2j-1}\beta^{T}_{1} \sigma_2\left(\omega_{1}x+ \gamma_{1}\right) + a_{2j-2},
			\quad j=1, 2, ...,2^{m},  \label{eq:xi1j}\\
			\xi_{1,0} & = x^2
			=\beta_2^T\sigma_2(\omega_{2}x),
		\end{align}   
		which suggest
		\begin{align}
			\bx_{1} = \sigma_{2}
			\begin{pmatrix}
				\omega_{1}x + \gamma_{1} \\
				\omega_{2}x
			\end{pmatrix}
			= \sigma_{2}({A}_{1} x +{b}_{1}),
			\quad \text{where}\quad
			{A}_1 =
			\begin{bmatrix}
				\omega_{1}\\
				\omega_{2}\\
			\end{bmatrix}, \quad {b}_1 =
			\begin{bmatrix}
				\gamma_{1}\\
				\bm{0}
			\end{bmatrix}.
		\end{align}
		and
		\begin{align}\label{eq:s2PnL1iv}
			\bm{\xi}_1 
			= A_{2,0}\bx_1+b_{2,0},
			\quad \text{where}\quad
			A_{2,0} =
			\begin{bmatrix}
				a_{21}\beta^{T}_{1} & \bm{0}\\
				\bm{0}& \beta_2^T\\
			\end{bmatrix}, \quad b_{2,0} =
			\begin{bmatrix}
				a_{22}\\
				0\\
			\end{bmatrix},
		\end{align}
		with
		$\bm{\xi}_1=[\xi_{1,1}, \xi_{1,2}, \ldots, \xi_{1,2^m},
		\xi_{1,0}]^T$,
		$a_{21}=[a_1, a_3, \ldots, a_{2^{m+1}-1}]^T$, 
		$a_{22}=[a_0, a_2, \ldots, a_{2^{m+1}-2}]^T$.
		
		\item[2)] The output of Layer $2$ intermediate variables
		are:
		\begin{align}
			\xi_{2,j}  & = \xi_{1,0} \xi_{1,2j}+\xi_{1,2j-1} \nonumber\\
			& = \beta^{T}_{1}\sigma_2( \omega_{1}\xi_{1,2j} + \gamma_{1}\xi_{1,0})
			+ \beta^{T}_{1} \sigma_2\left(\omega_{1}\xi_{1,2j-1}+ \gamma_{1}\right),
			\quad
			j=1, 2, ...,2^{m-1},\\
			\xi_{2,0}  & = (\xi_{1,0})^2=\sigma_2(\xi_{1,0}),
		\end{align} 
		which imply
		\begin{align}\label{A21-b21}
			\bx_2 = \sigma_2 ( A_{21}
			\bm{\xi_1}  + b_{21} ),
			\quad  
			\bm{x_2},b_{21} \in \bbR^{(8\cdot 2^{m-1}+1)\times 1}, 
			\ A_{21}\in \bbR^{(8\cdot 2^{m-1}+1)\times(2^m+1)},
		\end{align}
		and most elements in $A_{21},b_{21}$ are zeros.  The
		nonzero elements are given below using a Matlab subscript
		style as:
		\begin{align}\label{eq:A21b21}
			A_{21}(8j-8:8j,[2j\!-\!1, 2j,2^m+1])=
			\begin{bmatrix}
				\omega_{1}&\bm{0} & \bm{0}\\
				\bm{0}     & \omega_{1} & \gamma_{1}\\
			\end{bmatrix}, \quad b_{21}(8j-8:8j)=
			\begin{bmatrix}
				\gamma_{1}\\
				\bm{0}
			\end{bmatrix},
		\end{align}
		for $j=1,2,\ldots,2^{m-1}$, and the last element of
		$A_{2,1}$ is $1$.  According to the result
		\eqref{eq:s2PnL1iv} of Layer $1$, we get
		\begin{align}
			\bx_2 =	\sigma_2 \left( A_2 \bx_1 + b_2 \right),\quad
			{A}_2=A_{2,1}A_{2,0},\quad {b}_2=A_{2,1}b_{2,0}+b_{2,1}.
		\end{align}
		We also have
		\begin{align}\label{eq:s2PnL2iv}
			\bm{\xi}_2 = A_{3,0}\bx_2,
			\quad \text{where}\quad
			A_{3,0} =
			\begin{bmatrix}
				I_{2^{m-1}}\otimes [\beta_1^T\ \beta_1^T]  & \bm{0}\\
				\bm{0}& 1\\
			\end{bmatrix},
		\end{align}
		Here
		$\bm{\xi}_2=[\xi_{2,1}, \xi_{2,2}, \ldots, \xi_{2,2^{m-1}},
		\xi_{2,0}]^T$, and $I_{2^{m-1}}$ is the identity matrix 
		in $\bbR^{2^{m-1}}$. $\otimes$ stands for Kronecker
		product.
		
		\item[3)] The output of Layer $k\ (3\le k \le m)$
		intermediate variables are:
		\begin{align}
			\xi_{k,j} & = \xi_{k-1,0} \xi_{k-1,2j} + \xi_{k-1,2j-1}
			\qquad
			(j=1, 2, ...,2^{m-k+1}) \nonumber\\
			& = \beta^{T}_{1}\sigma_2(\omega_{1}\xi_{k-1,2j} + \gamma_{1}\xi_{k-1,0})
			+  \beta^{T}_{1}\sigma_2\left(\omega_{1}\xi_{k-1,2j-1}+ \gamma_{1}\right), 
			\\
			\xi_{k,0} & = (\xi_{k-1,0})^2
			= \sigma_2(\xi_{k-1,0}).
		\end{align}
		Denote
		$\bm{\xi}_k=[\xi_{k,1}, \xi_{k,2}, \ldots,
		\xi_{k,2^{m-k+1}}, \xi_{k,0}]^T$. We have
		\begin{equation}\label{eq:Lk}
		\bm{\xi}_k = A_{k+1,0}\bx_{k}, 
		\quad
		\bx_{k} = \sigma_2 (A_{k1} \bm{\xi}_{k-1} + b_{k1}),
		\end{equation}
		where $A_{k1},b_{k1}$ have the same formula as
		$A_{21},b_{21}$ given in \eqref{eq:A21b21} except that the
		maximum value of $j$ is $2^{m-k+1}$ rather than $2^{m-1}$,
		and $A_{k+1,0}$ has the same formula as $A_{30}$ given in
		\eqref{eq:s2PnL2iv} with $\bm{1}_{2^{m-1} \times 1}$
		replaced by $\bm{1}_{2^{m-k+1} \times 1}$ and
		$\bm{1}_{n}=[1,\ldots,1]^{T}\in \mathbb{R}^{n\times 1}$.
		Combining \eqref{eq:Lk} and \eqref{eq:s2PnL2iv}, we get
		\begin{align}
			\bm{x}_k = \sigma_2( A_k \bx_{k-1} + b_k ),
			\quad \text{where}\ 
			{A}_{k}=A_{k1}A_{k0},\quad {b}_{k}=b_{k1}.
		\end{align}
		
		\item[4)] The output of Layer $m+1$ intermediate variables
		are:
		\begin{align}
			\xi_{m+1,1} = \xi_{m,0}\xi_{m,2}+\xi_{m,1}
			= \beta^{T}_{1}\sigma_2(\omega_{1}\xi_{m,2}+\gamma_{1}\xi_{m,0})
			+\beta^{T}_{1}\sigma_2\left(\omega_{1}\xi_{m,1}+ \gamma_{1}\right).
		\end{align}
		Written into the following form
		\begin{equation}\label{eq:Lm1}
		\bm{\xi}_{m+1} := [ \xi_{m+1,1}] = A_{m+2,0}\bx_{m+1}, 
		\quad
		\bx_{m+1} = \sigma_2 (A_{m+1,1} \bm{\xi}_{m} + b_{m+1,1}),
		\end{equation}
		we have
		\begin{align}
			A_{m+1,1}=
			\begin{bmatrix}
				\omega_{1} & \bm{0} & \bm{0}\\
				\bm{0} & \omega_{1} & \gamma_{1}\\
			\end{bmatrix}, \quad b_{m+1,1} =
			\begin{bmatrix}
				\gamma_{1}\\
				\bm{0}
			\end{bmatrix},
		\end{align}
		and
		\begin{equation}
		{A}_{m+2,0}=[\beta^{T}_{1}\   \beta^{T}_{1}], \quad {b}_{m+2,0}=0.
		\end{equation}
		The iteration formula for $\bx_{m+1}$ is $\bx_{m+1} =
		\sigma_2( A_{m+1} \bx_{m} + b_{m+1} )$, where
		\begin{align}
			{A}_{m+1} = A_{m+1,1}A_{m+1,0},  \quad 
			{b}_{m+1} = b_{m+1,1}.
		\end{align}
		\item[5)] Since $\bm{\xi}_{m+1}=f(x)$, the network ends at
		Layer $m+2$, with $\bx_{m+2}=\bm{\xi}_{m+1}$. So we get
		${A}_{m+2}=A_{m+2,0}$, and $b_{m+2}=0$ from Eq.
		\eqref{eq:Lm1}.
	\end{enumerate}
	For $n<4$, the procedure can be obtained by removing some
	sub-steps from the cases $n\ge 4$.  From the
	construction process, we see that the number of layers is $m+2$, the
	numbers of nodes in Layer 1 to Layer $m+1$ are
	$6$, $8\times 2^{m-k+1}+1(2\le k \le m)$ and 8 respectively,
	and the number of nonzero weights in $\bm{A}_j$,
	$\bm{b}_j(1\le j\le m+2)$ are not bigger than 10,
	$(40\times 2^{m-1}+2)+8\times 2^{m-1}$,
	$(68\times 2^{m-j+1}+1)+4\times2^{m-j+1}(3\le j\le m)$, 72,
	8 respectively. Summing up these numbers, we reach the
	desired bound.
\end{proof}

\begin{remark}
	Theorem \ref{thm:s2xn1d} says we can use a $\sigma_2$
	network of scale $\mathcal{O}(\log_2 n)$ to represent
	$x^n$ exactly.  Theorem \ref{thm:s2Pn1d} says that any
	polynomial of degree less than $n$ can be represented
	exactly by a $\sigma_2$ neural network with
	$\lfloor\log_{2}n\rfloor+1$ hidden layers, and no more
	than $\mathcal{O}(n)$ nonzero weights. Such results are
	not available for ReLU network and neural networks using
	other non-polynomial activation functions, such as
	$\operatorname{logistic}$, $\tanh$,
	$\operatorname{softplus}$, $\operatorname{sinc}$ etc.  We
	note that the constants in the two theorems may not be
	optimal, but the orders of number of layers and number of
	nonzero weights are optimal.	
\end{remark}

\subsubsection{Error bounds of approximating smooth functions by
	deep ReQU networks}

Now we analyze the error of approximating general smooth
functions using ReQU networks. We first introduce some
notations and give a brief review of some classical results
of polynomial approximation.

Let $\Omega \subseteq \bbR^d$ be the domain on which the
function to be approximated is defined. For the
1-dimensional case in this section, we focus on $\Omega =I
:= [-1,1]$. Similar discussions and results can be extended
to $\Omega=[0,\infty)$ and $(-\infty, \infty)$ as well. We
denote the set of polynomials with degree up to $N$
defined on $\Omega$ by ${P}_N(\Omega)$, or simply
${P}_N$. Let $J^{\alpha, \beta}_{n}(x)$ be the Jacobi
polynomial of degree $n$, $n\in \bbN_0$; the family of all
these polynomials forms a complete set of orthogonal
bases in the weighted $L^2_{\omega^{\alpha,
		\beta}}(I)$ space with respect to weight
$\omega^{\alpha, \beta}(x)=(1-x)^{\alpha}(1+x)^{\beta}$
for $\alpha, \beta>-1$. To describe functions with high
order regularity, we define the Jacobi-weighted Sobolev
space $B_{\alpha, \beta}^{m} (I)$ as (see
e.g. \cite{shen_spectral_2011}):
\begin{equation}\label{eq:wtSobolevSpace}
B_{\alpha, \beta}^{m} (I)
:= \left\{ u : \partial_{x}^{k}u \in L_{\omega^{\alpha+k, \beta+k} }^2 (I) ,
\quad 0 \leq k \leq m \right\} , \quad m \in \bbN,
\end{equation}
with norm
\begin{equation}\label{eq:wtSobolevSpaceNorm}
\| f \|_{B^{m}_{\alpha,\beta}}
:= \bigg(
\sum_{k=0}^m \big\| \partial_x^k u \big\|^2_{L^2_{\omega^{\alpha+k,\beta+k}}}
\bigg)^{1/2}.
\end{equation}
Define the $L^2_{\omega^{\alpha,\beta}}$-orthogonal
projection $\pi^{\alpha,\beta}_N$:
$L^2_{\omega^{\alpha,\beta}}(I) \rightarrow P_N$ by requiring
\begin{equation}
\left( \pi_N^{\alpha, \beta} u - u, v\right)_{\omega^{\alpha,\beta}} = 0,
\quad
\forall\, v\in P_N.
\end{equation}
A detailed error estimate on the projection error
$\pi_N^{\alpha,\beta}u - u$ is given in Theorem 3.35 of
\cite{shen_spectral_2011}, by which we have the following
theorem on the approximation error of ReQU networks.
\begin{theorem}\label{thm:s2approxN}
	Let $\alpha,\beta>-1$, $N\ge 1$. For any
	$u\in B^{m}_{\alpha, \beta}(I)$, there exist a ReQU
	network $\Phi^{u}_{N}$ with $\lfloor\log_{2}N\rfloor+1$
	hidden layers, $\mathcal{O}(N)$ nodes, 
	and $\mathcal{O}(N)$ nonzero weights, satisfying the
	following estimates.
	
	1) If $0\le l \le m \le N+1$, we have
	\begin{align}\label{eq:err1d_proj1}
		\left\|\partial^{l}_{x}\left(
		R_{\sigma_{2}}(\Phi^{u}_{N}) - u
		\right)\right\|_{\omega^{\alpha+l, \beta+l}}
		\leq 
		c  \sqrt{ \dfrac{(N-m+1)!}{(N-l+1)!} }
		(N+m)^{(l-m)/2}  \|\partial^{m}_{x}u\|_{\omega^{\alpha+m, \beta+m}}.
	\end{align}
	
	2) If {$m>N+1 \geq l$}, we have
	\begin{align}\label{eq:err1d_proj2}
		\left\| \partial^{l}_{x}\left(
		R_{\sigma_{2}}(\Phi^{u}_{N}) - u
		\right)\right\|_{\omega^{\alpha+l, \beta+l}}
		\le c (2\pi N)^{-1/4} \left(\dfrac{\sqrt{e/2}}{N}\right)^{N-l+1}
		\|\partial^{N+1}_{x}u\|_{\omega^{\alpha+N+1, \beta+N+1} }.
	\end{align}
	Here $c\approx 1$ for $N\gg 1$.
\end{theorem}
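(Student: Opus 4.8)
The plan is to reduce the neural-network approximation error entirely to a classical polynomial projection error, exploiting the fact that $\sigma_2$ networks reproduce polynomials \emph{exactly}. Since $R_{\sigma_2}(\Phi^u_N)$ will be made to coincide with a fixed polynomial, the only error left is $\pi_N^{\alpha,\beta} u - u$, for which sharp weighted estimates are already available in the literature.

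First I would take the target to be the $L^2_{\omega^{\alpha,\beta}}$-orthogonal projection $p := \pi_N^{\alpha,\beta} u \in P_N$, a polynomial of degree at most $N$. By Theorem \ref{thm:s2Pn1d}, $p$ can be represented with no error by a $\sigma_2$ network $\Phi^u_N$ having exactly $\lfloor\log_2 N\rfloor+1$ hidden layers, at most $9N$ nodes, and at most $61N$ nonzero weights; this immediately yields the asserted architecture, namely $\lfloor\log_2 N\rfloor+1$ hidden layers with $\mathcal{O}(N)$ nodes and $\mathcal{O}(N)$ weights, and gives $R_{\sigma_2}(\Phi^u_N) = p$ identically on $I$.

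Next, because the realization is exact pointwise, differentiation commutes with the identification: $\partial_x^l R_{\sigma_2}(\Phi^u_N) = \partial_x^l p$ for every $l$, so that $\partial_x^l \big( R_{\sigma_2}(\Phi^u_N) - u \big) = \partial_x^l \big( \pi_N^{\alpha,\beta} u - u \big)$. Measuring both sides in the weighted norm $\|\cdot\|_{\omega^{\alpha+l,\beta+l}}$, the two claimed bounds \eqref{eq:err1d_proj1} and \eqref{eq:err1d_proj2} become \emph{exactly} the Jacobi projection estimates of Theorem 3.35 in \cite{shen_spectral_2011}, which I would invoke directly for the two regimes $0\le l\le m\le N+1$ and $m>N+1\ge l$.

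There is essentially no genuine obstacle here, as the two hard ingredients are already in place: the exact, size-$\mathcal{O}(N)$ polynomial realization of Theorem \ref{thm:s2Pn1d}, and the classical spectral error theory. The only point I would verify carefully is that the weighted norms and the definition of $B^m_{\alpha,\beta}(I)$ match those used in the cited projection theorem, so that no extra term is introduced when transferring from polynomial error to network error; since $\Phi^u_N$ reproduces $p$ and all of its derivatives with no error, this transfer is one-to-one.
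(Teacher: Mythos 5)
Your proposal is correct and follows exactly the paper's own argument: set $R_{\sigma_2}(\Phi^u_N) = \pi_N^{\alpha,\beta}u$ via the exact polynomial realization of Theorem \ref{thm:s2Pn1d}, then quote Theorem 3.35 of \cite{shen_spectral_2011} for the projection error. No gaps; the careful point you flag (matching the weighted norms and the definition of $B^m_{\alpha,\beta}$ to the cited theorem) is indeed the only thing to check, and it holds.
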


\begin{proof}
	For any given $u\in B^{m}_{\alpha, \beta}(I)$, the
	polynomial $f = \pi^{\alpha,\beta}_N u \in P_N$. The
	projection error $ \pi^{\alpha,\beta}_N u-u $ is estimated
	by Theorem 3.35 in \cite{shen_spectral_2011}, which is
	\eqref{eq:err1d_proj1} and \eqref{eq:err1d_proj2}
	with $R_{\sigma_{2}}(\Phi^{u}_{N})$ replaced by $
	\pi^{\alpha,\beta}_N u$.  By Theorem \ref{thm:s2Pn1d}, $f$
	can be represented exactly by a ReQU network
	$\Phi^{u}_{N}$ with $\lfloor\log_{2}N\rfloor+1$ hidden
	layers, $\mathcal{O}(N)$ nodes, and $\mathcal{O}(N)$
	nonzero weights, i.e.  $R_{\sigma_{2}}(\Phi^{u}_{N}) =
	\pi^{\alpha,\beta}_N u$. We thus obtain estimation
	\eqref{eq:err1d_proj1} and \eqref{eq:err1d_proj2}.
\end{proof}

\begin{remark}
	In \eqref{eq:err1d_proj1} and \eqref{eq:err1d_proj2}, we
	allow the error measured in high-order derivatives,
	i.e. $l\ge 3$, because $R_{\sigma_2}(\Phi^u_N)$ is an
	exact realization of a polynomial, which is infinitely
	differentiable. In practice, if $\Phi^u_N$ is a trained
	network with numerical error, we can not measure the error
	with derivatives order $\ge 3$, since $\partial_x^3
	\sigma_2(x)$ is not in $L^2$ space.
\end{remark}

Based on Theorem \ref{thm:s2approxN}, we can analyze the
network complexity of $\varepsilon$-approximation of a given
function with certain smoothness. For simplicity, we only
consider the case with $l=0$.  The result is given in the
following theorem.
\begin{theorem} \label{thm:1d_epsilon} For any given
	function $f(x)\in B^{m}_{\alpha,\beta}(I)$ with norm less
	than $1$, where $m$ is either a fixed positive integer or
	infinity, and for $\varepsilon\in(0,1)$ small enough, there exists a ReQU network $\Phi^f_\varepsilon$
	with number of layers $L$, number of nonzero weights $N$
	satisfying
	\begin{itemize}
		\item if $m$ is a fixed positive integer, then $L =
		\mathcal{O}\left(\frac1m\log_2\frac{1}{\varepsilon}\right)$,
		and $N =
		\mathcal{O}\big({\varepsilon}^{-\frac{1}{m}}\big)$;
		\item if $m=\infty$, i.e. $f(x)\in B^{\infty}_{\alpha,\beta}(I)$, then
		$L=
		\mathcal{O}\left(\log_2\left(\log\frac{1}{\varepsilon}\right)\right)$,
		and
		$N=
		\mathcal{O}\left(\frac{\log(1/\varepsilon)}{\log_{2}(\log(1/\varepsilon))} \right)$,
	\end{itemize}
	that approximates $f$ within an error tolerance
	$\varepsilon$, i.e.
	\begin{align}
		\|R_{\sigma_{2}}(\Phi^{f}_{\veps}) - f\|_{\omega^{\alpha,\beta}(I) } \le \veps.
	\end{align} 
\end{theorem}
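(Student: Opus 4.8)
The plan is to invoke Theorem~\ref{thm:s2approxN} with $l=0$ as a black box and then choose the polynomial degree $N$ as a function of $\varepsilon$ so that the right-hand side of the corresponding error estimate drops below $\varepsilon$. Since that theorem already produces a ReQU network $\Phi^u_N$ with $\lfloor\log_2 N\rfloor+1$ hidden layers, $\mathcal{O}(N)$ nodes and $\mathcal{O}(N)$ nonzero weights that realizes $\pi^{\alpha,\beta}_N f$ exactly, once the required $N=N(\varepsilon)$ is pinned down the claimed bounds follow by simply reading off $L=\mathcal{O}(\log_2 N)$ and $\#\Phi^f_\varepsilon=\mathcal{O}(N)$. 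Throughout I would use $\|f\|_{B^m_{\alpha,\beta}}<1$ to bound the derivative seminorms $\|\partial_x^m u\|_{\omega^{\alpha+m,\beta+m}}$ (or $\|\partial_x^{N+1}u\|_{\omega^{\alpha+N+1,\beta+N+1}}$) appearing on the right by a constant. The two regimes then differ only in which of the two estimates \eqref{eq:err1d_proj1}, \eqref{eq:err1d_proj2} is the relevant one.

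For the case of fixed finite $m$, I would use \eqref{eq:err1d_proj1}, valid once $N\ge m-1$. Setting $l=0$ there, the prefactor is $\sqrt{(N-m+1)!/(N+1)!}\,(N+m)^{-m/2}$. I would estimate $(N-m+1)!/(N+1)!=1/\prod_{j=0}^{m-1}(N+1-j)$, a product of $m$ factors each comparable to $N$, so that the square root is $\Theta(N^{-m/2})$; combined with $(N+m)^{-m/2}=\Theta(N^{-m/2})$ this gives total decay of order $N^{-m}$. Hence the approximation error is at most $C N^{-m}$ for a constant $C$ depending only on $m$, and choosing $N=\mathcal{O}(\varepsilon^{-1/m})$ forces it below $\varepsilon$. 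The weight count is then $\mathcal{O}(N)=\mathcal{O}(\varepsilon^{-1/m})$ and $L=\mathcal{O}(\log_2 N)=\mathcal{O}\!\big(\frac{1}{m}\log_2\frac{1}{\varepsilon}\big)$, as claimed.

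For $m=\infty$ the decay is super-algebraic and I would instead use \eqref{eq:err1d_proj2}, which applies for every $N$ since $m>N+1$ always holds. With $l=0$ its right-hand side is, up to the mild algebraic factor $(2\pi N)^{-1/4}$ and the bounded seminorm, of the form $(\sqrt{e/2}/N)^{N+1}$. Taking logarithms gives $\log(\text{error})\approx (N+1)\log(\sqrt{e/2}/N)\sim -N\log N$ as $N\to\infty$, so the requirement error $\le\varepsilon$ amounts to the transcendental condition $N\log N\gtrsim \log\frac{1}{\varepsilon}$. Inverting this relation (a Lambert-$W$ type asymptotic) yields $N=\mathcal{O}\!\big(\log(1/\varepsilon)/\log_2\log(1/\varepsilon)\big)$: indeed for this $N$ one has $\log N\sim\log\log(1/\varepsilon)$, whence $N\log N\sim\log(1/\varepsilon)$. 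Reading off the network size again gives $\#\Phi^f_\varepsilon=\mathcal{O}(N)=\mathcal{O}\!\big(\log(1/\varepsilon)/\log_2\log(1/\varepsilon)\big)$ and $L=\mathcal{O}(\log_2 N)=\mathcal{O}(\log_2\log(1/\varepsilon))$.

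I expect the only real work to be the asymptotic inversion in the $m=\infty$ case: making the heuristic ``$N\log N\sim\log(1/\varepsilon)\Rightarrow N\sim\log(1/\varepsilon)/\log\log(1/\varepsilon)$'' rigorous, including verifying that the chosen $N$ is a positive integer $\ge 1$ for all $\varepsilon$ small enough and that the neglected algebraic factor $(2\pi N)^{-1/4}$ does not disturb the leading-order balance. The finite-$m$ case is comparatively routine once the factorial product is estimated, and the translation from the degree $N$ to the layer and weight counts is immediate from Theorem~\ref{thm:s2approxN}.
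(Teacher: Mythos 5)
Your proposal is correct and follows essentially the same route as the paper: invoke Theorem~\ref{thm:s2approxN} with $l=0$, use \eqref{eq:err1d_proj1} to get the $cN^{-m}$ bound and $N=\mathcal{O}(\varepsilon^{-1/m})$ in the finite-$m$ case, and use \eqref{eq:err1d_proj2} to get super-algebraic decay (the paper phrases it as $c'e^{-\gamma N}$ with $\gamma=\mathcal{O}(\log N)$, which is exactly your $N\log N\sim\log(1/\varepsilon)$ balance) and invert for $N$ in the $m=\infty$ case. Your explicit attention to the factorial product and the Lambert-$W$ type inversion only makes more precise what the paper does implicitly.
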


\begin{proof}
	For a fixed $m$, or $N\gg m$, we obtain from
	\eqref{eq:err1d_proj1} that
	\begin{equation}
	\| R_{\sigma_{2}}(\Phi^{u}_{N}) - u \|_{\omega^{\alpha,\beta}(I) }
	\le c N^{-m} \| \partial_x^m u\|_{\omega^{\alpha+m,\beta+m}}. 
	\end{equation}
	By above estimate, we obtain that to achieve an error
	tolerance $\varepsilon$ to approximate a function with
	$B^{m}_{\alpha,\beta}(I)$ norm less than $1$, it
	suffices to take
	$N=\left(\frac{c}{\varepsilon}\right)^\frac{1}{m}$. For
	fixed $m$, we have
	$N=\mathcal{O}\big({\varepsilon}^{-\frac{1}{m}}\big)$,
	the depth of the corresponding ReQU network is
	$L=\mathcal{O}\left(\frac{1}{m}\log_2\frac{1}{\varepsilon}\right)$.
	
	For $f\in B^{\infty}_{\alpha,\beta}$, by taking
	$m=\infty$ in Theorem \ref{thm:s2approxN}, we have
	\begin{equation}
	\| R_{\sigma_{2}}(\Phi^{u}_{N}) - u \|_{\omega^{\alpha,\beta}(I) }
	\le c (2\pi N)^{-\frac14}  \left(\dfrac{\sqrt{e/2}}{N}\right)^{N+1}  \| u\|_{B_{\alpha, \beta}^{\infty}}
	\le c' e^{-\gamma N} \| u\|_{B_{\alpha, \beta}^{\infty}},
	\end{equation}
	where $c'$ is a general constant, and $\gamma\approx
	\mathcal{O}(\log N)$ can be larger than any fixed
	positive number for sufficient large $N$. To approximate
	a function with $B^{\infty}_{\alpha,\beta}(I)$ norm less
	than $1$ with error $\varepsilon= c' e^{-\gamma N}$, it
	suffices to take
	$N=\frac{1}{\gamma}\log\left(\frac{c'}{\varepsilon}\right)$,
	which means
	$N=\mathcal{O}\left(\frac{\log(1/\varepsilon)}{\log_{2}(\log(1/\varepsilon))}
	\right)$. The depth of the corresponding ReQU network is
	$L=\mathcal{O}\left(\log_2\left(\log\frac{1}{\varepsilon}\right)\right)$. Here
	$\veps$ is assumed to be small enough such that
	$\log_2\big(\log\frac{c'}{\varepsilon}\big)$ is no less
	than 1.
	
\end{proof}

\subsection{Approximation by deep networks using general rectified power units}

The results of approximation monomials, polynomials and
general smooth functions by ReQU networks discussed in
Subsection \ref{subsec:ReQU} can be extended to general RePU
networks.

To keep the paper short, we only present the results on
approximating monomials with RePU in Theorem
\ref{thm:spbasic}.  The other results similar to ReQU
networks can be obtained but the details are quite lengthy,
we report them in a separate paper \cite{li_PowerNet_2019}.

\begin{theorem}\label{thm:spbasic}
	Regarding the problem of using $\sigma_{s}(x)\;(2\le s\in
	\bbN)$ neural networks to exactly represent monomial
	$x^{n}$, $n\in \bbN$, we have the following results:
	\begin{itemize}
		\item[(1)] If $s=n$, the monomial $x^n$ can be realized
		exactly using a $\sigma_s$ networks having only 1 hidden
		layer with two nodes.
		
		\item[(2)] If $1\le n < s$, the monomial $x^n$ can be
		realized exactly using a $\sigma_s$ networks having only 1
		hidden layer with no more than $2s$ nodes.
		
		\item[(3)] If $n > s \ge 2$, the monomial $x^n$ can be
		realized exactly using a $\sigma_s$ networks having
		$\lfloor\log_s n\rfloor+2$ hidden layers with no more than
		$(6s+2) (\lfloor\log_s n\rfloor+2)$ nodes, no more than
		$\mathcal{O}(25s^{2} \lfloor\log_s n\rfloor)$ nonzero
		weights.
	\end{itemize}
\end{theorem}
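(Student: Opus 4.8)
The plan is to handle the three regimes separately, in each case reducing the realization of $x^n$ to the single basic identity
$$y^s=\sigma_s(y)+(-1)^s\sigma_s(-y),$$
which holds for \emph{all} $y\in\bbR$ and costs one hidden layer with two nodes. This is the $\sigma_s$-analogue of \eqref{eq:s2x2} (the signs $1,(-1)^s$ replacing $\beta_2=[1,1]^T$) and is the only ``atom'' needed for Parts (1) and (2); Part (3) additionally needs a $\sigma_s$-analogue of the product formula \eqref{eq:s2xy}. With this atom, Part (1) ($s=n$) is immediate: $x^s=\sigma_s(x)+(-1)^s\sigma_s(-x)$ is exactly a two-node, one-layer realization, so nothing further is required.

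\textbf{Part (2), $1\le n<s$.} Here I would represent $x^n$ as a linear combination of shifted $s$-th powers. Fix $s$ distinct shifts $\tau_1,\dots,\tau_s$ and seek coefficients $c_k$ with $\sum_{k=1}^s c_k (x+\tau_k)^s = x^n + C$ for some constant $C$. Matching the coefficients of $x^1,\dots,x^s$ gives the $s$ conditions $\binom{s}{i}\sum_k c_k\tau_k^{\,s-i}=\delta_{in}$ for $i=1,\dots,s$; since the $\tau_k$ are distinct, the associated ($s\times s$ Vandermonde) system is invertible and fixes the $c_k$ uniquely. The leftover constant $C$ does not need to be matched, because it is cancelled by the bias of the output layer. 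Realizing each $(x+\tau_k)^s$ by the atom of Part (1) then gives a single hidden layer with exactly $2s$ nodes. It is precisely the fact that the degree-$0$ term is free that keeps the count at $2s$ rather than $2(s+1)$.

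\textbf{Part (3), $n>s\ge2$.} This is the substantive case, and I would model it on the proof of Theorem \ref{thm:s2xn1d} with base $2$ replaced by base $s$. Write $n=\sum_{j=0}^m a_j s^j$ with $a_j\in\{0,\dots,s-1\}$, $a_m\ge1$ and $m=\lfloor\log_s n\rfloor$, set $\xi_k^{(1)}:=x^{s^k}$, $\xi_k^{(2)}:=x^{\sum_{j<k}a_j s^j}$, and use the iteration $\xi_k^{(1)}=(\xi_{k-1}^{(1)})^s$, $\xi_k^{(2)}=(\xi_{k-1}^{(1)})^{a_{k-1}}\xi_{k-1}^{(2)}$, finishing with $x^n=\xi_m^{(1)}\xi_m^{(2)}$. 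Each iteration is to be realized by a single $\sigma_s$ layer: the $s$-th power $(\xi_{k-1}^{(1)})^s$ is again the two-node atom, while the running product $(\xi_{k-1}^{(1)})^{a_{k-1}}\xi_{k-1}^{(2)}$ is a bivariate monomial of total degree $a_{k-1}+1\le s$ in the two previous-layer outputs. This gives the depth $m+2=\lfloor\log_s n\rfloor+2$; it then remains to realize the per-layer operations within $6s+2$ nodes and $\mathcal O(s^2)$ weights and to sum over the $m+2$ layers, the cases $n\le s$ being absorbed through Parts (1)--(2).

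\textbf{Main obstacle.} The hard part is exactly the efficient single-layer realization of the digit product $u^{a}v$ with $0\le a\le s-1$, together with its tight node/weight count. For $s=2$ the exponent $a$ is at most $1$, so the only product is the degree-$2$ form $uv$ handled by \eqref{eq:s2xy}; for general $s$ the exponent can be as large as $s-1$, and a direct expansion of $u^{a}v$ into $s$-th powers of affine forms in $(u,v)$ (each realized by the atom) is \emph{not} size-optimal: the number of affine $s$-th powers needed to reproduce a monomial of intermediate degree can grow like $\mathcal O(s^2)$, which would violate the $6s+2$ per-layer node bound. Reaching an $\mathcal O(s)$-node realization therefore requires exploiting the special structure of the operands --- for instance that $\xi_{k-1}^{(1)}=x^{s^{k-1}}$ has a controlled sign and that one factor enters only linearly --- rather than a generic power-sum argument. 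Establishing this realization and then performing the somewhat lengthy bookkeeping that converts the per-layer counts into the global bounds $(6s+2)(\lfloor\log_s n\rfloor+2)$ nodes and $\mathcal O(25 s^2\lfloor\log_s n\rfloor)$ weights is where essentially all of the effort lies, consistent with the authors' remark that the fuller $\sigma_s$ development is deferred to a separate paper.
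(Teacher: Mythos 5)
Your Parts (1) and (2) coincide with the paper's proof: the atom $\rho_s(x):=\sigma_s(x)+(-1)^s\sigma_s(-x)=x^s$, and for $1\le n<s$ the expansion $a_0+\sum_{k=1}^s a_k\rho_s(x+b_k)$ with distinct shifts, an invertible $s\times s$ Vandermonde block matching the coefficients of $x^1,\dots,x^s$, and the constant absorbed by the output bias, giving $2s$ nodes.

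Part (3), however, has a genuine gap, and you have located it yourself: your construction stands or falls on a single-layer, $\mathcal{O}(s)$-node realization of the digit product $u^{a}v$ with $a\le s-1$, which you do not supply (and which, as you note, a generic Waring-type expansion into $s$-th powers of affine forms in $(u,v)$ would cost $\mathcal{O}(s^2)$ nodes). The paper never solves this problem; it sidesteps it by splitting the operation across \emph{two} layers. It introduces an extra iteration variable $z_{k+1}=(\xi_k)^{a_k}$, computed in one layer by Part (2) with at most $2s$ nodes (a univariate monomial of degree $<s$), and only in the \emph{next} layer forms the plain bilinear product $y_{k+2}=z_{k+1}y_{k+1}$ via the polarization identity $yz=\frac14\left[(y+z)^2-(y-z)^2\right]$, each square again costing at most $2s$ nodes by Part (2), hence $4s$ nodes for the product. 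Together with the $2$ nodes for $\xi_{k+1}=\rho_s(\xi_k)$ this yields the per-layer budget $2+2s+4s=6s+2$ and the weight count $\mathcal{O}(25s^2)$ per layer. The staggering of $z_k$ relative to $y_k$ is precisely why the theorem claims $\lfloor\log_s n\rfloor+2$ hidden layers rather than the $\lfloor\log_s n\rfloor+1$ your one-layer-per-digit scheme would give; the extra hidden layer in the statement is the signal that the intended construction is the two-stage one. To repair your argument you should either adopt this decomposition or actually exhibit the $\mathcal{O}(s)$-node single-layer realization of $u^{a}v$, which is not done in the paper and is not obviously available.
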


\begin{proof}
	
	(1) It is easy to check that $x^s$ has an exact $\sigma_s$
	realization given by
	\begin{equation}\label{eq:rhos}
	\rho_{s}(x) := \sigma_{s}(x) + (-1)^{s} \sigma_{s}(-x) = x^s.
	\end{equation}
	
	(2) For the case of $1\le n < s$, we consider the
	following linear combination
	\begin{align}\label{eq:xnrepre}
		a_0 + \sum_{k=1}^s a_{k} \rho_{s}(x+b_{k})
		& = a_0 + \sum^{s}_{k=1} a_{k}  \left( \sum^{s}_{j=0}C^s_j  b^{s-j}_{k}   x^{j} \right)
		= a_0+ \sum^{s}_{j=0}C^s_j \left( \sum^{s}_{k=1} a_{k} b^{s-j}_{k}  \right) x^{j},
	\end{align}
	where $a_0, a_k, b_k$, $k=1,\ldots, s$ are parameters to be
	determined. $C^s_j$ are binomial coefficients. The above
	expression is equal to $x^{n}$, provided that the
	parameters solve the following linear system:
	\begin{equation}\label{eq:RePU_Vandermonde}
	D_{s+1} \bm{a} :=
	\begin{bmatrix}
	1         &  1     & \cdots & 1  & 0 			 \\
	\vdots    & \vdots &        & \vdots       & \vdots 		 \\
	b_{1}^{s-n}  & b_{2}^{s-n}  & \cdots & b_{s}^{s-n} & 0  \\
	\vdots    & \vdots &        &        & \vdots 		 \\
	b_{1}^{s-1}  & b_{2}^{s-1}  & \cdots & b_{s}^{s-1} & 0  \\
	b_{1}^{s}  & b_{2}^{s}  & \cdots & b^{s}_{s} &  1 \\
	\end{bmatrix}
	\begin{bmatrix}
	a_{1} \\
	\vdots \\
	\cdot \\
	a_{s} \\
	a_{0}
	\end{bmatrix}
	= 
	\begin{bmatrix}
	0 \\
	\vdots \\
	\displaystyle{(C^{s}_{n})^{-1}} \\
	\vdots \\
	0 \\
	\end{bmatrix},
	\end{equation}
	where the top-left $s\times s$ submatrix of $D_{s+1}$ is
	a Vandermonde matrix, which is invertible as long as
	$b_k$, $k=1,\ldots, s$ are distinct. For simplicity, we
	choose $b_k$, $k=0,\ldots, s$ to be equidistant points,
	then \eqref{eq:RePU_Vandermonde} is uniquely
	solvable. Solving for $a_0, \ldots, a_s$ we obtain an
	exact representation of $x^n$ using \eqref{eq:xnrepre},
	which corresponds to a neural network having one hidden
	layer with no more than $2s$ $\sigma_s$ units.
	
	For example, when $s=2$, we may take $b_1=-1$, $b_1=1$.
	Solving Eq. \eqref{eq:RePU_Vandermonde} with $n=1$,
	we get $a_1=-\frac14$, $a_2=\frac14$, and $a_0=0$. Thus
	\begin{equation*}
		x = \frac{1}{4} \rho_2(x+1) - \frac14 \rho_2(x-1).
	\end{equation*}
	When $s=3$, take $b_1=-1$, $b_2=0$, $b_3=1$, we obtain
	\begin{align*}
		x & = \dfrac{1}{6} \big[\rho_{3}(x-1) - 2\rho_{3}(x) + \rho_{3}(x+1) \big], \\
		x^{2} & = \dfrac{1}{6} \big[\rho_{3}(x+1)  -  \rho_{3}(x-1) \big]  -  \dfrac{1}{3}.
	\end{align*}
	
	(3) Now, we consider the case $n > s \ge 2$, $n\in\bbN$. For
	any given numbers $y, z\in \mathbb{R}$, using the identity
	\begin{equation}
	y z = \frac14 \big[ (y+z)^2 - (y-z)^2 \big]
	\end{equation}
	and the fact that $(y+z)^2$, $(y-z)^2$ both can be
	realized exactly by a one layer $\sigma_s$ network with
	no more than $2s$ nodes, we conclude that the product
	$y z$ can be realized by one layer $\sigma_s$ network
	with no more than $4s$ nodes.  To realize $x^n$ by
	$\sigma_s$, we rewrite $n$ in the following form
	\begin{align}
		n = a_m\cdot s^{m}+ a_{m-1}\cdot s^{m-1} + \cdots + a_{1}\cdot s + a_{0}, \quad m=\lfloor \log_{s}m\rfloor,
	\end{align}
	where $a_{j}\in \{0, 1,\ldots,s-1\}$ for $j=0, 1,...,m-1$
	and $a_{m}=1$. So we have
	\begin{equation}
	x^n = (x^{s^m})^{a_m} (x^{s^{m-1}})^{a_{m-1}} \cdots (x^{s})^{a_1} (x)^{a_0}.
	\end{equation}
	Define $\xi_k = x^{s^k}$, $z_{k+1}=(\xi_k)^{a_k}$, $k=0, 1,
	\ldots, m$, and
	\begin{equation} \label{eq:xs_iter}
	y_2 = x^{a_0},\qquad  y_{k+2} = z_{k+1} y_{k+1} 
	\ \big(=(x^{s^k})^{a_k} y_{k+1} \big),\quad \text{for}\ k=1,\ldots, m,
	\end{equation}
	we have $y_{m+2} = x^n$. Eq. \eqref{eq:xs_iter} can
	be regarded as an iteration scheme, with iteration variables
	$\xi_k, y_k, z_k$, where the subscript $k$ stands for the
	iteration step. A schematic diagram for this iteration is
	given in Fig. \ref{fig:RePU_Xn}. Different to Theorem
	\ref{thm:s2xn1d}, for $s>2$, we need a deep $\sigma_s$
	neural network with $m+2$ hidden layers to realize $x^n,
	n>s$, due to the introduction of intermediate variables
	$z_k$.  In each layer, we need no more than $2+2s+4s$
	activation nodes to calculate $\xi_{k+1}=\rho_s(\xi_{k})$,
	$z_{k+1}=(\xi_{k})^{a_k}$, and $y_{k+1}=z_k y_k$. So in
	total we need no more than
	$(6s+2)(m+2)=\mathcal{O}{(6s\log_s n)}$ nodes.  A direct
	calculation shows that the number of nonzero weights in the
	network is no more than $\mathcal{O}( 25 s^{2} \log_{s}n )$.
	The theorem is proved.
\end{proof}

\begin{figure}[t!]
	\includegraphics[width=0.99\textwidth]{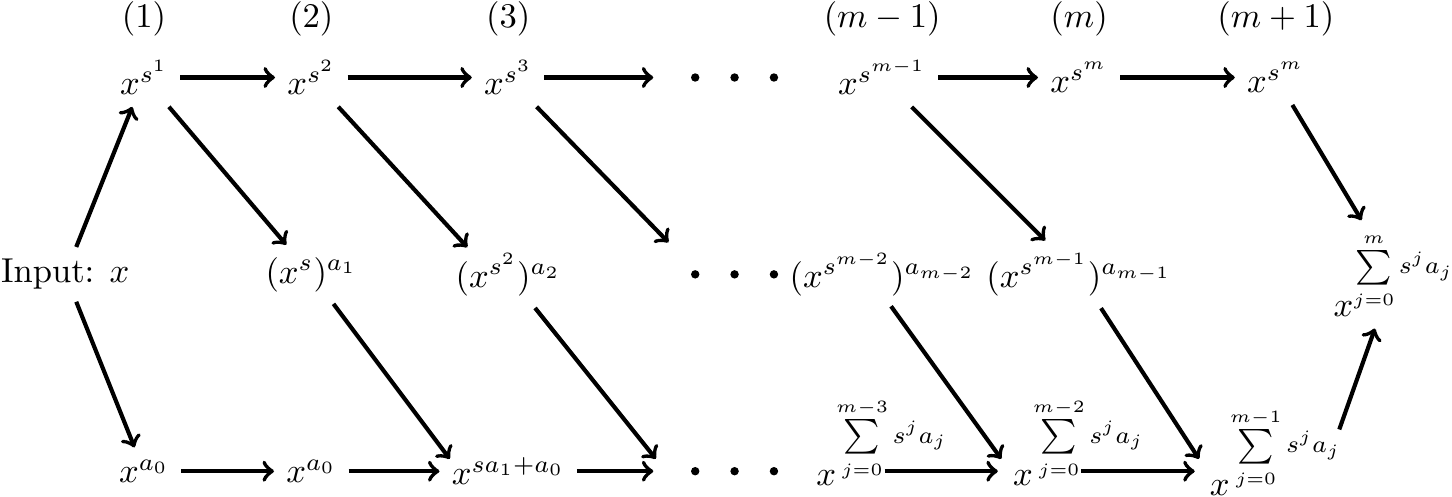}
	\caption{A schematic diagram for $\sigma_s$ network realization of $x^{n}$, $n>s$.
		$(j)$ represents the $j$-th hidden layer of
		intermediate variables.}
	\label{fig:RePU_Xn}
\end{figure}

\section{Approximation of multivariate functions}

In this section, we discuss how to approximate multivariate
smooth functions by ReQU networks.  Similar to the
univariate case, we first study the representation of
polynomials then discuss the approximation error of general
smooth functions.

\subsection{Deep ReQU network representations of multivariate polynomials}

\begin{theorem}\label{thm:mdpoly}
	If $f(x)$ is a multivariate polynomial with {\em total}
	degree $n$ on $\bbR^d$, then there exists a $\sigma_2$
	neural network having $d\lfloor\log_{2} n\rfloor+d$ hidden
	layers with no more than $\mathcal{O}(C^{n+d}_d)$
	activation functions and nonzero weights, that can
	represent $f$ with no error. We note that, here the
	constant behind the big $\mathcal{O}$ can be bounded independent of $d$.
\end{theorem}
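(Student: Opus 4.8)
The plan is to reduce the multivariate representation problem to repeated application of the univariate result in Theorem \ref{thm:s2Pn1d}, processing one spatial variable at a time. First I would write the total-degree-$n$ polynomial in $d$ variables as a nested (Horner-type) expansion in the last variable, say $x_d$:
\begin{equation*}
f(x_1,\ldots,x_d) = \sum_{j=0}^{n} g_j(x_1,\ldots,x_{d-1})\, x_d^{\,j},
\end{equation*}
where each coefficient $g_j$ is itself a polynomial in the remaining $d-1$ variables, of total degree at most $n-j\le n$. The key structural observation is that a univariate polynomial of degree $n$ in $x_d$ with fixed (scalar) coefficients can be realized exactly by the $\sigma_2$ network of Theorem \ref{thm:s2Pn1d} using $\lfloor\log_2 n\rfloor+1$ hidden layers and $\mathcal{O}(n)$ weights; the same network topology evaluates the polynomial regardless of what the coefficients are, so I can feed the precomputed values $g_j(x_1,\ldots,x_{d-1})$ in as inputs and apply one ``univariate-in-$x_d$'' block of depth $\lfloor\log_2 n\rfloor+1$. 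This handles the $d$-th variable at the cost of one group of $\lfloor\log_2 n\rfloor+1$ layers.

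The recursion is then the engine: the coefficients $g_j$, being $(d-1)$-variate polynomials, are produced by a network built the same way, consuming another $\lfloor\log_2 n\rfloor+1$ layers to peel off $x_{d-1}$, and so on down to a single variable. Unwinding the recursion $d$ times gives a network of depth $d(\lfloor\log_2 n\rfloor+1)=d\lfloor\log_2 n\rfloor+d$ hidden layers, matching the stated count. To drive this I would introduce intermediate variables indexed by a dimension level and by the remaining monomial exponents, and realize each product/combine step with the exact identities \eqref{eq:s2x2}--\eqref{eq:s2xypp} of Lemma \ref{lem:s2basic}, exactly as in the univariate constructions; since every operation is exact, the whole composition represents $f$ with no error.

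The main obstacle is the \emph{size} bound, not the depth. A naive per-variable recursion multiplies widths and would blow up the weight count far beyond $\mathcal{O}(C^{n+d}_d)$. The fix is to organize the construction around the monomial basis directly: the number of monomials of total degree at most $n$ in $d$ variables is exactly $C^{n+d}_d$, and I would argue that the network needs only $\mathcal{O}(1)$ weights per monomial by sharing the partial-product subnetworks (for instance, computing $x_i^2, x_i^4, x_i^8,\ldots$ once and reusing them across all monomials that need those powers, analogous to the binary-exponent bookkeeping in Theorem \ref{thm:s2xn1d}). The careful part is verifying that this sharing keeps both the node count and the nonzero-weight count at $\mathcal{O}(C^{n+d}_d)$ with a constant independent of $d$; I expect this to require a bound on how many monomials pass through each shared node at each level, together with the constant-width-per-step property already established in the univariate theorems. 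Once the accounting shows at most a bounded number of weights are charged to each of the $C^{n+d}_d$ monomials, the size claim and its $d$-independent constant follow.
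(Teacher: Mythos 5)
Your decomposition and depth count are exactly the paper's: Horner-in-the-last-variable, feed the $(d-1)$-variate coefficients $g_j$ in as inputs to a univariate evaluation block (with the first layer of that block modified via \eqref{eq:s2xy} since the coefficients are now inputs rather than constants), and recurse, giving $d(\lfloor\log_2 n\rfloor+1)$ hidden layers. The gap is in how you handle the size bound. You assert that the per-variable recursion ``multiplies widths and would blow up the weight count far beyond $\mathcal{O}(C^{n+d}_d)$'' and then abandon it in favor of a monomial-sharing scheme --- but the recursion does not blow up, and the sharing scheme as you describe it does not work. The recursion is fine precisely because of the degree bound you yourself wrote down: $\deg g_j \le n-j$, so by the inductive hypothesis the sub-network for $g_j$ has size $\mathcal{O}(C^{(n-j)+(d-1)}_{d-1})$, and the Vandermonde (hockey-stick) identity $\sum_{j=0}^{n} C^{(n-j)+(d-1)}_{d-1} = C^{n+d}_{d}$ makes the total telescope to exactly the dimension of the total-degree space; the only additions are the $\mathcal{O}(n)$ cost of the final combining block and the $\mathcal{O}(n\log_2 n)$ cost of padding the sub-networks to a common depth with identity maps \eqref{eq:s2x}, both of which are dominated by $C^{n+d}_d$. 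This is the paper's argument (for $d=2$ the explicit count is $\sum_{j=0}^{n-1}\mathcal{O}(n-j)=\mathcal{O}(n^2)=\mathcal{O}(C^{n+2}_2)$), and the $d$-independence of the constant comes from the fact that the overhead added at each level of the induction is negligible relative to the geometrically growing $C^{n+d}_d$.

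Your replacement argument, by contrast, has a real defect: if you compute the repeated squares $x_i^2, x_i^4, x_i^8,\ldots$ once and then assemble each monomial $\bx^{\bm{k}}$ from them, each monomial needs up to $\mathcal{O}(d\log_2 n)$ pairwise multiplications, not $\mathcal{O}(1)$, so the straightforward accounting gives $\mathcal{O}(C^{n+d}_d\, d\log_2 n)$ weights, and the ``bound on how many monomials pass through each shared node'' that would rescue this is never supplied. The partial-product sharing you would need to make this work --- sharing the prefix $x_1^{k_1}\cdots x_{d-1}^{k_{d-1}}$ across all monomials extending it --- is structurally identical to the Horner recursion you started with. So the fix is not a new construction: keep your recursion, track the decreasing degrees of the $g_j$, and close the size bound with the identity $\sum_{j}C^{(n-j)+(d-1)}_{d-1}=C^{n+d}_d$.
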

\begin{proof}
	1) We first consider the 2-dimensional case. Suppose
	$f(x,y)=\sum_{i+j=0}^n a_{ij}x^iy^j$ and $n\ge 4$ (the results for $n\le 3$ are similar but
	easier, so skipped here).  To represent $f(x, y)$ exactly
	with a $\sigma_2$ neural network based on the results for
	the 1-dimensional case given in Theorem \ref{thm:s2Pn1d},
	we first rewrite $f(x,y)$ as
	\begin{align}
		f(x,y) 
		= \sum_{i=0}^n \bigg( \sum_{j=0}^{n-i} a_{ij} y^j \bigg) x^i
		=: \sum_{i=0}^n a^y_i x^i,
		\quad \text{where}\quad 
		a^y_{i} = \sum\limits_{j=0}^{n-i} a_{ij} y^j.
	\end{align}
	So to realize $f(x,y)$, we can first realize $a^y_i$,
	$i=0,\ldots, n-1$ using $n$ small $\sigma_2$ networks
	$\Phi_i$, $i=0,\ldots, n-1$, i.e.
	$R_{\sigma_2} (\Phi_i) (y) = a^y_i$ for given input $y$;
	then use a $\sigma_2$ network $\Phi_n$ to realize the
	1-dimensional polynomials
	$f(x,y) = \sum_{i=0}^n a^y_i x^i$.  There are two places that
	need some technical treatment, the details are given
	below.
	\begin{enumerate}
		
		\item[(1)] The network $\Phi_n$ takes $a^y_i$, 
		$i=0,\ldots,n$ and $x$ as input. So these quantities must
		be presented at the same layer of the overall neural
		network, because we do not want connections over
		non-adjacent layers.  By Theorem \ref{thm:s2Pn1d}, the
		largest depth of networks $\Phi_i$, $i=0,\ldots, n-1$ is
		$\lfloor\log_2 n\rfloor+2$, so we can lift $x$ to layer
		$\lfloor\log_2 n\rfloor+2$ using multiple
		$id_{\mathbb{R}}(\cdot)$ operations. Similarly, we also
		keep a record of input $y$ in each layer using multiple
		$id_{\mathbb{R}}(\cdot)$ operations, such that $\Phi_i$,
		$i=1,\ldots,n-1$ can start from appropriate layer and
		generate output exactly at layer $\lfloor\log_2
		n\rfloor+2$. The overall cost for recording $x, y$ in
		layers $1,\ldots, \lfloor\log_2 n\rfloor+2$ is
		$\mathcal{O}(\lfloor\log_2 n\rfloor+2)$, which is small
		comparing to the number of coefficients $C^{n+2}_2$.
		
		\item[(2)] While realizing $\sum_{i=0}^n a^y_i x^i$, the
		coefficients $a^y_i, i=0, \ldots n$ are network input
		instead of fixed parameters. So when applying the
		network construction given in Theorem \ref{thm:s2Pn1d},
		we need to modify the structure of the first layer of
		the network. More precisely, Eq. \eqref{eq:xi1j} in
		Theorem \ref{thm:s2Pn1d} should be changed to
		\begin{align}
			\xi^y_{1,j} & = a^y_{2j-1}x+a^y_{2j-2} \nonumber\\
			& = \beta^{T}_{1} \sigma_2\left(\omega_{1}x+ \gamma_{1}a^y_{2j-1}\right) 
			+ \beta^{T}_{1} \sigma_2\left(\omega_{1} a^y_{2j-2} + \gamma_{1} \right),
			\quad j=1,\ldots, 2^{m}.
		\end{align}
		So the number of nodes for the first layer changed from
		$6$ to $2+8\cdot 2^m$, the number of nonzero weights for
		the first layer changed from $10$ to $16\cdot 2^m+2$. So
		the number of hidden layers, number of nodes and nonzero
		weights of $\Phi_n$ can be bounded by $\lfloor\log_2
		n\rfloor+1$, $17n$, and $77n$ respectively.
	\end{enumerate}
	Assembling $\Phi_0, \ldots, \Phi_n$, the overall network
	to represent $f(x,y)$ has $2\lfloor\log_2 n\rfloor+3$
	layers with number of nodes no more than
	\begin{equation*}
		\sum_{j=0}^{n-1} 9 (n-j) + 17n + 8 (m+2) = 9\frac{n (n+1)}{2} + 17n + 8m + 16 =\mathcal{O}(C^{n+d}_{d}),
	\end{equation*}
	and number of weights no more than
	\begin{equation*}
		\sum_{j=0}^{n-1} 61 (n-j) + 77n + 16 (m+2)\times 2 + 12n = 61 \frac {n (n+1)}{2} + 89n + 32m + 64 =\mathcal{O}(C^{n+d}_{d}).
	\end{equation*}
	Thus, we proved that the theorem is true for the case
	$d=2$.
	
	2) The case $d>2$ can be proved by mathematical induction
	using the similar procedure as done for $d=2$ case. Note
	that we pad in some zeros in each direction in the
	iteration. Since after each dimension iteration, the
	number of degree of freedom are geometrically reduced, by
	a straightforward calculation, one can show that the
	constant behind the big $\mathcal{O}$ can be made
	independent of dimension $d$. An improved algorithm using
	less padding zeros is proposed in another
	paper~\cite{li_PowerNet_2019}.
	
\end{proof}

Using a similar approach as in Theorem \ref{thm:mdpoly}, one
can easily prove the following theorem.

\begin{theorem}\label{thm:mdpoly_tensor}
	For a polynomial $f_N$ in a tensor product space
	$Q_N^d(I_1\times\cdots \times I_d) :=
	P_N(I_1)\otimes\cdots\otimes P_N(I_d)$, there exists a
	$\sigma_2$ network having $d\lfloor\log_{2} N\rfloor+d$
	hidden layers with no more than $\mathcal{O}(N^d)$
	activation functions and nonzero weights, can represent
	$f_N$ with no error.
\end{theorem}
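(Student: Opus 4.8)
The plan is to mirror the structure of the proof of Theorem \ref{thm:mdpoly}, but exploit the simpler tensor-product structure of $Q_N^d$ to get the cleaner bound $\mathcal{O}(N^d)$. I would proceed by induction on the dimension $d$. The base case $d=1$ is exactly Theorem \ref{thm:s2Pn1d}: a univariate polynomial of degree $N$ is realized exactly by a $\sigma_2$ network with $\lfloor\log_2 N\rfloor+1$ hidden layers and $\mathcal{O}(N)$ nodes and weights.

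For the inductive step, suppose the claim holds for dimension $d-1$. I would write any $f_N \in Q_N^d$ by separating out the last variable,
\begin{equation*}
	f_N(x_1,\ldots,x_d) = \sum_{i=0}^N a_i(x_1,\ldots,x_{d-1})\, x_d^{\,i},
\end{equation*}
where each coefficient $a_i \in Q_N^{d-1}(I_1\times\cdots\times I_{d-1})$ is itself a tensor-product polynomial of degree $N$ in the remaining $d-1$ variables. By the induction hypothesis, each $a_i$ is realizable by a $\sigma_2$ network with $(d-1)\lfloor\log_2 N\rfloor+(d-1)$ hidden layers and $\mathcal{O}(N^{d-1})$ nodes and weights. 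I would run these $N+1$ subnetworks in parallel to produce the whole vector $(a_0,\ldots,a_N)$ at a common layer, using $id_{\mathbb{R}}$-lifting (Lemma \ref{lem:s2basic}, Eq. \eqref{eq:s2x}) to keep $x_d$ available at that layer, exactly as in part (1) of the proof of Theorem \ref{thm:mdpoly}. Then I would append the degree-$N$ univariate construction of Theorem \ref{thm:s2Pn1d} in the variable $x_d$, with the same first-layer modification as in part (2) of Theorem \ref{thm:mdpoly} so that the now variable coefficients $a_i$ enter as inputs rather than fixed parameters. This outer stage adds $\lfloor\log_2 N\rfloor+1$ layers, so the total depth is $d\lfloor\log_2 N\rfloor+d$ as claimed.

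For the size count, the outer univariate stage contributes only $\mathcal{O}(N)$ nodes and weights, while the $N+1$ inner subnetworks contribute $(N+1)\cdot\mathcal{O}(N^{d-1})=\mathcal{O}(N^d)$; the $id_{\mathbb{R}}$-lifting overhead for $x_d$ is $\mathcal{O}(\log_2 N)$ per layer, hence negligible against $N^d$. Summing gives $\mathcal{O}(N^d)$ total nodes and nonzero weights.

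The main subtlety, as in Theorem \ref{thm:mdpoly}, is the layer-alignment bookkeeping: the $N+1$ inner subnetworks for the $a_i$ all have the same depth $(d-1)\lfloor\log_2 N\rfloor+(d-1)$ by the induction hypothesis, which is convenient here since tensor-product structure means every coefficient has the same degree $N$ in all remaining variables — so no padding of unequal depths is needed, unlike the total-degree case where the inner subnetworks shrink as the degree drops. This is precisely what yields the cleaner $\mathcal{O}(N^d)$ bound, and why the proof is "similar but easier." I expect the only real care to be in verifying that the constant in the big-$\mathcal{O}$ stays controlled across the induction, which follows because the outer stage adds only an additive $\mathcal{O}(N)$ term at each level.
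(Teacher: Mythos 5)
Your proposal is correct and takes essentially the same approach the paper intends: the paper's own "proof" is a one-line remark deferring to the construction of Theorem \ref{thm:mdpoly}, and your induction on $d$ with the decomposition $f_N=\sum_{i=0}^N a_i(x_1,\ldots,x_{d-1})x_d^{\,i}$, parallel subnetworks for the $a_i$, identity-lifting of $x_d$, and the modified first layer of Theorem \ref{thm:s2Pn1d} is exactly that construction specialized to the tensor-product case. Your observation that all inner subnetworks have equal depth (so no depth-padding is needed, unlike the total-degree case) correctly explains why the bound is the clean $\mathcal{O}(N^d)$.
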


\subsection{Error bounds of approximating multivariate
 functions by ReQU networks}

Now we analyze the error of approximating general
multivariate smooth functions using ReQU networks.

For a vector $\bx=(x_1, \ldots, x_d) \in \bbR^d$, we define
$|\bx|_1:=|x_1| +\cdots + |x_d|$, $|\bx|_{\infty} :=
\max_{i=1}^d |x_i|$.  Define the high dimensional Jacobi
weight as $\omega^{\bm{\alpha},\bm{\beta}}(\bx) :=
\omega^{\alpha_1,
	\beta_1}(x_1)\cdots\omega^{\alpha_d, \beta_d}(x_d)$.
We define the multidimensional Jacobi-weighted Sobolev space
$B_{\alpha, \beta}^{m} (I^d)$ as \cite{shen_spectral_2011}:
\begin{equation*}\label{eq:wtSobolevSpaceMd}
	B_{\bm{\alpha}, \bm{\beta}}^{m} (I^d)
	:= \left\{ u\in L^{2}(I^{d}) \mid  \partial^{\bm{k}}_{\bx} u 
	:= \partial_{x_1}^{k_1}\cdots\partial_{x_d}^{k_d} u 
	\in L_{\omega^{\bm{\alpha}+\bm{k}, \bm{\beta}+\bm{k}} }^2 (I^d),
	\  \bm{k}\in \bbN_0^d, \ |\bm{k}|_1 \le m
	\right\} , \ m \in \bbN_0,
\end{equation*}
with norm and semi-norm
\begin{equation*}\label{eq:wtSobolevSpaceNormMd}
	\| u \|_{B^{m}_{\bm{\alpha},\bm{\beta}}} := \Big(
	\sum_{|\bm{k}|_1 \le m} 
	\left\| 
	\partial_{\bx}^{\bm{k}} u
	\right\|^2_{L^2_{\omega^{\bm{\alpha}+\bm{k}, \bm{\beta}+\bm{k}} }}
	\Big)^{1/2},
	\quad
	| u |_{B^{m}_{\bm{\alpha},\bm{\beta}}} := \Big(
	\sum_{\ |\bm{k}|_1 = m} \left\| 
	\partial_{\bx}^{\bm{k}} u
	\right\|^2_{L^2_{\omega^{\bm{\alpha}+\bm{k}, \bm{\beta}+\bm{k}} }}
	\Big)^{1/2}.
\end{equation*}

Define the
$L^2_{\omega^{\bm{\alpha},\bm{\beta}}}$-orthogonal
projection $\pi^{\bm{\alpha},\bm{\beta}}_N$:
$L^2_{\omega^{\bm{\alpha},\bm{\beta}}}(I^d) \rightarrow
Q_N^d(I^d)$ by the property
\begin{equation}
\left( \pi_N^{\bm{\alpha}, \bm{\beta}} u - u, v\right)_{\omega^{\bm{\alpha},\bm{\beta}}} = 0,
\quad
\forall\, v\in Q_N^d(I^d).
\end{equation}
Then for $u\in {B^{m}_{\bm{\alpha},\bm{\beta}}}(I^d)$, we
have the following error estimate (see Theorem 8.1 and
Remark 8.13 in \cite{shen_spectral_2011}):
\begin{equation}\label{eq:Mdpolyapprox}
\| \pi_N^{\bm{\alpha}, \bm{\beta}} u - u \|_{L^2_{\omega^{\bm{\alpha},\bm{\beta}}}(I^d)}
\le c N^{-m} 
| u|_{{B^{m}_{\bm{\alpha},\bm{\beta}}}},
\quad 1\le m \le N,
\end{equation}
where $c$ is an absolute constant. Combining
\eqref{eq:Mdpolyapprox} and Theorem \ref{thm:mdpoly_tensor},
we obtain the following upper bound for the
$\varepsilon$-approximation of functions in
${B^{m}_{\bm{\alpha},\bm{\beta}}}(I^d)$ space.

\begin{theorem}\label{thm:MdSobolev}
	For any $u\!\in\! {B^{m}_{\bm{\alpha},\bm{\beta}}}(I^d)$, with
	$|u|_{{B^{m}_{\bm{\alpha},\bm{\beta}}}(I^d)} \le 1,\ \bm{\alpha,\beta}\!\in\!(-1,\infty)^{d}$,  and any $\varepsilon\in(0,1)$ there
	exists a $\sigma_2$ neural network $\Phi_\varepsilon^u$
	having
	$\mathcal{O}\left(\frac{d}{m} \log_2 \frac{1}{\varepsilon}
	+ d\right)$ layers with no more than
	$\mathcal{O}\left(\varepsilon^{-d/m}\right)$ nodes and
	nonzero weights, that approximates $u$ with
	${L^2_{\omega^{\bm{\alpha},\bm{\beta}}}(I^d)}$-error less
	than $\varepsilon$, i.e.
	\begin{equation}
	\| R_{\sigma_2} (\Phi_\varepsilon^u) - u \|_{L^2_{\omega^{\bm{\alpha},\bm{\beta}}}(I^d)}
	\le \varepsilon.
	\end{equation}
\end{theorem}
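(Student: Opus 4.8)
The plan is to combine the Jacobi-weighted spectral projection estimate \eqref{eq:Mdpolyapprox} with the exact $\sigma_2$-realization of tensor-product polynomials furnished by Theorem \ref{thm:mdpoly_tensor}. First I would fix a truncation degree $N$ (to be chosen later) and set $f := \pi_N^{\bm{\alpha},\bm{\beta}} u \in Q_N^d(I^d)$. By \eqref{eq:Mdpolyapprox} together with the hypothesis $|u|_{B^m_{\bm{\alpha},\bm{\beta}}(I^d)} \le 1$, this projection incurs error $\| f - u \|_{L^2_{\omega^{\bm{\alpha},\bm{\beta}}}(I^d)} \le c N^{-m}$ whenever $1 \le m \le N$, where $c$ is the absolute constant from that estimate.

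Next I would apply Theorem \ref{thm:mdpoly_tensor} to the polynomial $f \in Q_N^d(I^d)$: this produces a $\sigma_2$ network $\Phi_\varepsilon^u$ with $d\lfloor\log_2 N\rfloor + d$ hidden layers and $\mathcal{O}(N^d)$ nodes and nonzero weights that realizes $f$ with no error, i.e. $R_{\sigma_2}(\Phi_\varepsilon^u) = f$ pointwise. Consequently the network approximation error coincides exactly with the projection error, so $\| R_{\sigma_2}(\Phi_\varepsilon^u) - u \|_{L^2_{\omega^{\bm{\alpha},\bm{\beta}}}(I^d)} = \| f - u \|_{L^2_{\omega^{\bm{\alpha},\bm{\beta}}}(I^d)} \le c N^{-m}$.

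To meet the prescribed tolerance I would choose $N$ so that $c N^{-m} \le \varepsilon$, that is $N = \lceil (c/\varepsilon)^{1/m} \rceil = \mathcal{O}(\varepsilon^{-1/m})$; for $\varepsilon$ small enough this automatically respects the admissibility range $1 \le m \le N$ required by \eqref{eq:Mdpolyapprox}. Substituting this $N$ back into the size bounds, the node and weight counts become $\mathcal{O}(N^d) = \mathcal{O}(\varepsilon^{-d/m})$, while $\log_2 N = \tfrac{1}{m}\log_2(c/\varepsilon) + \mathcal{O}(1) = \mathcal{O}(\tfrac1m\log_2\tfrac1\varepsilon)$ turns the layer count $d\lfloor\log_2 N\rfloor + d$ into $\mathcal{O}(\tfrac{d}{m}\log_2\tfrac1\varepsilon + d)$, as claimed.

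There is no serious analytic obstacle here: the heavy lifting is already packaged in Theorem \ref{thm:mdpoly_tensor} (exact, $\mathcal{O}(N^d)$-size $\sigma_2$ realization of tensor-product polynomials) and in the cited projection estimate \eqref{eq:Mdpolyapprox}. The only steps requiring attention are routine bookkeeping ones — verifying that the floor and the additive constant $d$ in the layer formula do not spoil the stated orders, confirming that the absolute constant $c$ is harmlessly absorbed into the $\mathcal{O}$-notation when solving for $N$, and checking that the chosen $N$ lies in the admissible range so that \eqref{eq:Mdpolyapprox} applies.
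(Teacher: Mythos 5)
Your proposal is correct and follows exactly the route the paper intends: the paper states Theorem \ref{thm:MdSobolev} as an immediate consequence of ``combining \eqref{eq:Mdpolyapprox} and Theorem \ref{thm:mdpoly_tensor}'', and your argument (project onto $Q_N^d$, realize the projection exactly by the $\sigma_2$ network of Theorem \ref{thm:mdpoly_tensor}, then choose $N=\mathcal{O}(\varepsilon^{-1/m})$) is precisely that combination with the bookkeeping spelled out.
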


\begin{remark}
	According to the classic nonlinear approximation theory
	by DeVore, Howard and
	Micchelli~\cite{devore_optimal_1989}, the results of
	Theorem \ref{thm:1d_epsilon} (first part) and Theorem
	\ref{thm:MdSobolev} are optimal in the case that the
	approximation depends on the function to be
	approximated continuously.
\end{remark}

\begin{remark}
	Note that results for approximating functions in
	weighted Sobolev space given in Theorem
	\ref{thm:MdSobolev} can be extended to $C^k$ if $k$ is
	sufficient large, similar to the second part of Theorem
	\ref{thm:1d_epsilon}.  Comparing this result with Theorem 1 in
	\cite{yarotsky_error_2017}, we see that the number of
	computational units and nonzero weights needed by a ReQU
	network to approximate a function $u\in
	{B^{m}_{\bm{\alpha},\bm{\beta}}}(I^d)$ for $m$
	sufficient large, with an error tolerance $\varepsilon$
	is less than that needed by a ReLU network. The ReLU
	network is $\log\frac{1}{\varepsilon}$ times larger than
	corresponding ReQU network. For low accuracy
	approximation, the factor
	$\mathcal{O}(\log\frac{1}{\varepsilon})$ is not very
	big, but for high accuracy approximations, this factor
	can be as large as several dozens, which could
	make a big difference in large scale computations.
	
	Note that, for functions with fixed lower order
	continuity, ReLU network can give good approximation using
	less number of layers, or use very deep ReLU networks to
	break the bounds given in Theorem \ref{thm:MdSobolev}. We
	refer interested readers to the recent works by
	Voigtlaender and Petersen
	\cite{voigtlaender_approximation_2019}, and Yarotsky
	\cite{yarotskyOptimalApproximationContinuous2018}.
	
\end{remark}

\section{High-dimensional functions with sparse
	polynomial\\ approximations}

In last section, we showed that for a $d$-dimensional
function with partial derivatives up to order $m$ in
$L^2(I^d)$ can be approximated within error $\varepsilon$ by
a ReQU neural network with complexity
$\mathcal{O}(\varepsilon^{-d/m})$. When $m$ is fixed or much
smaller than $d$, the network complexity has an exponential
dependence on $d$. However, in a lot of applications,
high-dimensional problems may have low intrinsic dimension
(see e.g. \cite{sloan_when_1998}\cite{wang_why_2005}).  One
particular example are high-dimensional tensor product
functions(or linear combinations of finite terms of tensor
product functions), which can be well approximated by a {\em
	hyperbolic cross} or {\em sparse grid} truncated series.

\subsection{A brief review of hyperbolic cross approximations and sparse grids}

Sparse grids were originally introduced by
S. A. Smolyak\cite{smolyak_quadrature_1963} to integrate or
interpolate high dimensional functions. Hyperbolic cross
approximation is a technique similar to sparse grids but
without the concept of grids. We introduce hyperbolic cross
approximation by considering a tensor product function:
$f(\bx) = f_1(x_1)\cdots f_d(x_d)$. Suppose that $f_1,
\ldots, f_d$ have similar regularity that can be well
approximated by using an orthonormal bases $\{ \phi_k,\; k=0,
1,\ldots. \}$; that is,
\begin{equation*}
	f_i(x) = \sum_{k=0}^\infty b_k^{(i)} \phi_k(x),\quad |b_k^{(i)}|\le c \bar{k}^{-r},  \quad i=1,\ldots, d,
\end{equation*}
where $c$ is a general constant, $r\ge 1$ is a constant
depending on the regularity of $f_i$, $\bar{k}:=\max\{1,
k\}$. So we have an expansion for $f$ as
\begin{equation*}
	f(\bx) 
	= \prod_{i=1}^d \bigg( \sum_{k=0}^\infty b_k^{(i)} \phi_k(x_i) \bigg)
	= \sum_{\bm{k}\in \bbN_0^d} b_{\bm{k}} \phi_{\bm{k}}(\bx),  
\end{equation*}
where
\begin{equation*}
	|b_{\bm{k}}| 
	= \big|b_{k_1}^{(1)}\cdots b_{k_d}^{(d)} \big|
	\le c^d (\bar{k}_1\cdots \bar{k}_d)^{-r},\quad 
	\phi_{\bm{k}}(\bm{x}) = \phi_{1}(x_{1})\cdots \phi_{d}(x_{d}).
\end{equation*}
Thus, to have a best approximation of $f(\bx)$ using finite
terms, one should take
\begin{equation} \label{eq:HyperbolicCrossDef}
f_N := \sum_{\bm{k}\in {\chi}_N^d} b_{\bm{k}} \phi_{\bm{k}}(\bx),
\end{equation}
where 
\begin{equation}
{\chi}_N^d := \left\{ 
\bm{k}=(k_1, \ldots, k_d)  \in \bbN_0^d
\mid 
\bar{k}_1\cdots \bar{k}_d \le N
\right\}
\end{equation}
is the hyperbolic cross index set. We call $f_N$ defined by
\eqref{eq:HyperbolicCrossDef} a hyperbolic cross
approximation of $f$.

For general functions defined on $I^d$, we choose
$\phi_{\bm{k}}$ to be multivariate Jacobi polynomials
$J_{\bm{n}}^{\bm{\alpha},\bm{\beta}}$, and define the
hyperbolic cross polynomial space as
\begin{equation} \label{eq:hyperbolic_poly}
X^d_N := \text{span}\{\, J_{\bm{n}}^{\bm{\alpha},\bm{\beta}},
\quad  
\bm{n}\in \chi^d_N
\,\}.
\end{equation} 
Note that the definition of $X_N^d$ doesn't depend
$\bm{\alpha}$ and $\bm{\beta}$.  $\{
J_{\bm{n}}^{\bm{\alpha},\bm{\beta}}\,\}$ is used to served
as a set of bases for $X_N^d$.  To study the error of
hyperbolic cross approximation, we define Jacobi-weighted
Korobov-type space
\begin{equation}\label{eq:Korobov}
\mathcal{K}^m_{\bm{\alpha},\bm{\beta}}(I^d)
:= \left\{\, 
u\in L^{2}_{\omega^{\bm{\alpha,\beta}}}(I^{d})
\ : \	 \partial^{\bm{k}}_{\bx} u  \in L^2_{\omega^{\bm{\alpha}+\bm{k},\bm{\beta}+\bm{k}}}(I^d),
\ 
0\le \ |\bm{k}|_\infty \le m
\,\right\},
\quad \text{for}\ m\in\bbN_0, 
\end{equation}  
with norm and semi-norm
\begin{equation}\label{eq:KorobovNormMd}
\| u \|_{\mathcal{K}^{m}_{\bm{\alpha},\bm{\beta}}} := \left(
\sum_{ |\bm{k}|_\infty \le m} \!\!
\left\| 
\partial_{\bx}^{\bm{k}} u
\right\|^2_{L^2_{\omega^{\bm{\alpha}+\bm{k}, \bm{\beta}+\bm{k}} }}
\right)^{\!\frac12},
\quad
| u |_{\mathcal{K}^{m}_{\bm{\alpha},\bm{\beta}}} := \left(
\sum_{\ |\bm{k}|_\infty = m} \!\! \left\| 
\partial_{\bx}^{\bm{k}} u
\right\|^2_{L^2_{\omega^{\bm{\alpha}+\bm{k}, \bm{\beta}+\bm{k}}}}
\right)^{\!\frac12}.
\end{equation}
For any given
$u\in \mathcal{K}^0_{\bm{\alpha},\bm{\beta}}
(=B^0_{\bm{\alpha},\bm{\beta}})$, the hyperbolic cross
approximation $\pi^{\bm{\alpha,\beta}}_{N,H}u\in X^{d}_{N}$ can be defined as a projection by requiring
\begin{equation}\label{eq:hyperbolic_proj}
(\pi_{N, H}^{\bm{\alpha},\bm{\beta}} u -u, v)_{\omega^{\bm{\alpha}, \bm{\beta}}} = 0, 
\quad
\forall\, v\in X_N^d.
\end{equation}
Then we have the following error estimate about the
hyperbolic cross approximation (see Theorem 2.2 in
\cite{shen_sparse_2010}):
\begin{equation}\label{eq:hyperbolic_error}
\| \partial^{\bm{l}}_{\bx} 
(\pi_{N, H}^{\bm{\alpha},\bm{\beta}} u - u)
\|_{\omega^{\bm{\alpha+l}, \bm{\beta+l}}}
\le D_1 N^{|\bm{l}|_\infty - m} 
|u|_{\mathcal{K}^{m}_{\bm{\alpha},\bm{\beta}}},
\quad 0\le \bm{l} \le m, \ m\ge 1,
\end{equation}
where $D_1$ is a constant independent of $N$.  It is known
that the cardinality of $\chi_N^d$ is of order
$\mathcal{O}(N(\log N)^{d-1})$ in
\cite{shen_sparse_2010}. The above error estimate says that
to approximate a function $u\in
\mathcal{K}^{m}_{\bm{\alpha},\bm{\beta}}$
with an error tolerance $\varepsilon$, one only needs a
space of Jacobi polynomials of dimension at most
$\mathcal{O}\left(\varepsilon^{-1/m}(\frac{1}{m} \log
\frac{1}{\varepsilon})^{d-1}\right)$,
the exponential dependence on $d$ is weakened (cp. Theorem
\ref{thm:MdSobolev}).  To remove the exponential term
$(\log\frac{1}{\varepsilon})^{d-1}$, one may consider a more
general sparse polynomial space\cite{shen_sparse_2010}:
\begin{equation}\label{eq:hc_poly_opt}
X^d_{N,\gamma} := \text{span}\big\{\, J_{\bm{n}}^{\bm{\alpha},\bm{\beta}},
\quad  
(\Pi_{i=1}^d\bar{n}_i) |\bm{n}|_{\infty}^{-\gamma} \le N^{1-\gamma}
\,\big\},
\quad 
-\infty \le \gamma < 1.
\end{equation}
In particular, $X^d_{N,0}=X^d_N$ is the hyperbolic cross
space defined in \eqref{eq:hyperbolic_poly}, and
$X^d_{N,-\infty} := \text{span}\big\{\,
J_{\bm{n}}^{\bm{\alpha},\bm{\beta}}, \ |\bm{n}|_{\infty} \le
N\,\big\}$ is the standard full grid.  For $0<\gamma<1$, it
is known that (see lemma 3 in \cite{griebel_sparse_2007}):
\begin{equation}\label{eq:card_opt_hc_space}
\text{Card}(X^d_{N,\gamma}) = C(\gamma,d) N, 
\quad 0< \gamma < 1,
\end{equation}
where $C(\gamma, d)$ is a constant that depends on $\gamma$
and $d$ but is independent of $N$. We call $X_{N,\gamma}^d,
0<\gamma<1$ optimized hyperbolic cross polynomial space. It
is proved by Shen and Wang that the
$L^2_{\omega^{\bm{\alpha},\bm{\beta}}}$-orthogonal
projection $\pi_{N,\gamma}^{\bm{\alpha},\bm{\beta}}$ from
Korobov space to $X_{N,\gamma}^d$ satisfies the following
estimate (see Theorem 2.3 in \cite{shen_sparse_2010}):
\begin{equation}\label{eq:opt_hc_approx}
\| \pi_{N,\gamma}^{\bm{\alpha},\bm{\beta}} u - u \|_{\omega^{\bm{\alpha},\bm{\beta}}}
\le D_2 N^{-m(1-\gamma(1-\frac{1}{d}))}
|u|_{\mathcal{K}^{m}_{\bm{\alpha},\bm{\beta}}},
\quad 0<\gamma < 1,
\end{equation}
where $D_2$ is a constant independent of $N$.  From
\eqref{eq:card_opt_hc_space} and \eqref{eq:opt_hc_approx},
we get that to approximate a function $u\in
\mathcal{K}^{m}_{\bm{\alpha},\bm{\beta}}$ with an error
tolerance $\varepsilon$, one only needs a space of Jacobi
polynomials of dimension at most
$\mathcal{O}\left(\varepsilon^{-1/m(1-\gamma(1-\frac{1}{d}))}\right)$.  We will later use this estimate to derive another upper bound of approximating functions in
${\mathcal{K}^{m}_{\bm{\alpha},\bm{\beta}}}$ using deep ReQU
networks.

In practice, the exact hyperbolic cross projection is not
easy to calculate. An alternate approach is the sparse
grid, which uses hierarchical interpolation schemes to build
a hyperbolic cross-like approximation of high dimensional
functions. To define sparse grids for $I^d$, we first define
the underlying 1-dimensional interpolations.  Given a series
of interpolation point sets $\mathcal{X}^{i}=\{ x^{
	i}_{1},\cdots,x^{i}_{m_{i}} \}\subseteq [-1,1]$, $m_i =
\text{Card}(\mathcal{X}^i)$, $i=1,2,\ldots $, with
$0<m_1<m_2 < \cdots$, the interpolation on $\mathcal{X}^i$
for $f\in C^0(I)$ is defined as
\begin{align}
	\mathcal{U}^{ i}(f)  = \sum^{m_{i}}\limits_{j=1}f(x^{ i}_{j})  \ell^{i}_{j}(x),
\end{align}
where $\ell^{i}_{j}(x) \in
P_{m_i-1}([-1,1])\ (j=1,2,\ldots,m_{i})$ are the Lagrange
interpolation polynomials for the interpolation points
$\mathcal{X}^{i}$.  The sparse grid interpolation for
high-dimension function $f\in C^0(I^d)$ is defined as
\cite{smolyak_quadrature_1963}:
\begin{align}\label{eq:spg_interp}
	\mathcal{A}(q,d) (f)
	= \sum_{d\leq |\bi|_1\leq q}
	\Big( \Delta^{ i_{1}}\otimes \cdots \otimes \Delta^{ i_{d}} \Big) (f),
	\quad q\ge d,
\end{align}
where $\Delta^i=\mathcal{U}^i - \mathcal{U}^{i-1}$,
$i\!\in\! \bbN$. For convenience, we define
$\mathcal{U}^0:=0$, $m_0=0$, $\mathcal{X}^0 = \emptyset$.
Formally, \eqref{eq:spg_interp} can be defined on any grids
$\{\, \mathcal{X}^i,\, i=1,2,\ldots, q-d+1 \,\}$. However,
to have a one-to-one transform between the values on
interpolation points and the coefficients of linearly
independent bases in the interpolation space, we need $\{\,
\mathcal{X}^i,\, i=1,2,\ldots, q-d+1 \,\}$ to be nested,
i.e.  $\mathcal{X}^1 \subset \mathcal{X}^2 \subset \cdots \subset
\mathcal{X}^{q-d+1}$. Fast transforms between physical
values and interpolation coefficients always exist for
sparse grid interpolations using nested grids
\cite{shen_efficient_2010, shen_efficient_2012}. Define sparse grid index set as
\begin{equation}
\mathcal{I}^q_d := \bigcup_{d\le |\bi|_1 \le q} 
\tilde{\mathcal{I}}^{i_1}\times \cdots \times 
\tilde{\mathcal{I}}^{i_d},
\quad
\text{where}\ 
\tilde{\mathcal{I}}^{k} := \mathcal{I}^{k}\setminus\mathcal{I}^{k-1},
\quad
\mathcal{I}^{k} = \{\, 1,2, \ldots, m_k \,\}.
\end{equation}
Then the set of the sparse grid interpolation points and the
corresponding interpolation space are given as
\begin{align}
	\mathcal{\mathcal{X}}^q_d
	= 
	\bigcup\limits_{d\leq |\bi|_1\leq q}
	\Big( (\mathcal{X}^{i_1}\setminus \mathcal{X}^{i_1-1})\times \cdots \times (\mathcal{X}^{i_1}\setminus \mathcal{X}^{i_1-1}) \Big),
	\quad q\ge d,
\end{align}
\begin{align}
	{V}^q_d = \text{span}\{\,
	\tilde{\phi}_{\bm{k}}(\bx), \, \bm{k}\in \mathcal{I}^q_d
	\,\} \quad q\ge d,
\end{align}
where $\tilde{\phi}_{\bm{k}}$ can be chosen as the
hierarchical interpolation basis defined in
\cite{shen_efficient_2010}, or the Lagrange-type
$d$-dimensional interpolation polynomial on points
$\mathcal{X}^q_d$, which takes value $1$ on $\bm{k}$-th
interpolation point and $0$ on the other points.

A commonly used 1-dimensional scheme is the
Chebyshev-Gauss-Lobatto scheme, which uses the extrema of
the Chebyshev polynomials as interpolation points:
\begin{align}
	x^{i}_{j}
	= -\cos\left(\dfrac{  (j-1)\pi }{ m_{i}-1  }  \right),
	\quad j=1, 2,\cdots,m_i.
\end{align}
In order to obtain nested sets of points, $m_i$ are chosen
as
\begin{align}
	m_{i} = \begin{cases}
		1, & i=1, \\
		2^{i-1} + 1,  & i>1,
	\end{cases}
\end{align}
with $x^1_1 :=0$.  Define
\begin{align}
	F^{k}_{d}
	:= \{\, f:[-1,1]^{d} \to \bbR \mid D^{\bm{\alpha}}f \in C([-1,1]^{d}),
	\ 
	\forall\ |\bm{\alpha}|_\infty \le k\;  \}.
\end{align}
Then for any function $f\in F^k_d$, with
$\|f\|_{F^k_d}:=\max_{|\bm{\alpha}|_{\infty}\leq
	k}\|\partial^{\alpha}f\|_{L^{\infty} }\le 1$, the
interpolation error on the above Chebyshev sparse grids are
bounded as Theorem 8 in \cite{barthelmann_high_2000}:
\begin{align}\label{eq:spg_interp_error}
	\| f - \mathcal{A}(q,d) f\|_{L^{\infty }} 
	\le c_{d,k}  2^{-kq} q^{2d-1}
	\le c_{d,k}  n^{-k}  (\log n)^{(k+2)(d-1)+1},
\end{align}
where
$n=\text{Card}(\mathcal{X}^q_d)=\text{Card}(\mathcal{I}^q_d)=\mathcal{O}(2^{q}q^{d-1})$
is the number of points in the sparse grids, and $c_{d,k}$
is a constant that depends on $d,k$ only.  Note that if a
different norm instead of the $L^\infty$ norm is used,
one can improve the result a little bit, but no results with
error bound smaller than $\mathcal{O}(n^{-k})$ is known.

\subsection{Error bounds of deep ReQU network approximation for
	multivariate\\ functions with sparse structures}

Now we discuss the ReQU network approximation of
high-dimensional smooth functions with sparse polynomial
expansions, which takes hyperbolic cross and sparse grid
polynomial expansions as examples.  We introduce the concept
of {\em downward closed} polynomial space first. A linear
polynomial space $P_C$ is said to be downward closed if it
satisfies the following: if $d$-dimensional polynomial $
p(\bx) \in P_{C}$, then $ \partial^{\bm{k}}_{\bx} p(\bx)
\in P_{C}$ for any $\bm{k}\in \bbN_0^d$, at the same time,
there exists a set of bases that is composed of monomials
only.  It is easy to verify that the hyperbolic cross
polynomial space $X^d_N$, the sparse grid polynomial
interpolation space $V^q_d$, and the optimized hyperbolic
cross space $X^d_{N,\gamma}$ are all downward closed.  For
a downward closed polynomial space, we have the following
ReQU network representation results.

\begin{theorem}
	\label{thm:Pcomplete_ReQU}
	Let $P_C$ be a downward closed linear space of
	$d$-dimensional polynomials with dimension $n$, then for
	any function $f\in P_C$, there exists a $\sigma_2$ neural
	network having no more than $\sum_{i=1}^d\lfloor\log_{2}
	N_i\rfloor+d$ hidden layers, no more than $\mathcal{O}(n)$
	activation functions and nonzero weights, can represent
	$f$ exactly. Here $N_i$ is the maximum polynomial degree
	with respect to the $i$-th coordinate.
\end{theorem}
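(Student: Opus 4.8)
The plan is to prove the statement by induction on the dimension $d$, following the construction behind Theorem \ref{thm:mdpoly} but replacing its crude count $\mathcal{O}(C^{n+d}_d)$ with $\mathcal{O}(n)$ by exploiting the combinatorial structure of a downward closed index set. Write $P_C=\mathrm{span}\{\bx^{\bm{\alpha}}:\bm{\alpha}\in\Lambda\}$, where $\Lambda\subseteq\bbN_0^d$ is a downward closed multi-index set (this is exactly what the monomial-basis and derivative-closure conditions encode) with $\#\Lambda=n$ elements, and $N_i=\max_{\bm{\alpha}\in\Lambda}\alpha_i$. The base case $d=1$ is immediate from Theorem \ref{thm:s2Pn1d}: here $\Lambda=\{0,1,\dots,N_1\}$ so $n=N_1+1$, and a univariate polynomial of degree $N_1$ is realized exactly by a $\sigma_2$ network with $\lfloor\log_2 N_1\rfloor+1$ hidden layers and $\mathcal{O}(N_1)=\mathcal{O}(n)$ nodes and weights.

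For the inductive step I would peel off the last coordinate. Writing $\bx'=(x_1,\dots,x_{d-1})$ and collecting powers of $x_d$,
\begin{equation*}
f(\bx)=\sum_{k=0}^{N_d} g_k(\bx')\,x_d^{\,k},
\qquad
g_k(\bx')=\sum_{(\bm{\alpha}',k)\in\Lambda} c_{(\bm{\alpha}',k)}\,(\bx')^{\bm{\alpha}'}.
\end{equation*}
The decisive observation is that each slice $\Lambda_k:=\{\bm{\alpha}'\in\bbN_0^{d-1}:(\bm{\alpha}',k)\in\Lambda\}$ is again downward closed (lowering any of the first $d-1$ components keeps a multi-index in $\Lambda$), so $g_k$ lies in a downward closed $(d-1)$-variate space of cardinality $n_k=\#\Lambda_k$; and, crucially, the slices partition $\Lambda$, giving $\sum_{k=0}^{N_d}n_k=n$. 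By the induction hypothesis each $g_k$ is realized by a $\sigma_2$ subnetwork of depth at most $\sum_{i<d}\lfloor\log_2 N_i\rfloor+(d-1)$, using $\mathcal{O}(n_k)$ nodes and weights; I pad the shallower ones to a common depth with the identity realization \eqref{eq:s2x} and run them in parallel on the shared input $\bx'$, at total cost $\sum_k\mathcal{O}(n_k)=\mathcal{O}(n)$.

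It then remains to combine $(g_0,\dots,g_{N_d})$ into $\sum_k g_k x_d^{\,k}$. I would treat this as a univariate degree-$N_d$ polynomial in $x_d$ whose coefficients happen to be network inputs rather than fixed scalars, reusing the $\bigl(\lfloor\log_2 N_d\rfloor+1\bigr)$-layer, $\mathcal{O}(N_d)$-weight construction of Theorem \ref{thm:s2Pn1d} with its first layer modified exactly as in step (2) of the proof of Theorem \ref{thm:mdpoly} (replacing \eqref{eq:xi1j} by the variant that forms a product of two inputs through \eqref{eq:s2xy}), while $x_d$ is forwarded to the combining layer by identity maps. The depth then telescopes to $\sum_{i<d}\lfloor\log_2 N_i\rfloor+(d-1)+\lfloor\log_2 N_d\rfloor+1=\sum_{i=1}^d\lfloor\log_2 N_i\rfloor+d$, and the genuine polynomial computations contribute $\sum_k\mathcal{O}(n_k)+\mathcal{O}(N_d)=\mathcal{O}(n)$ since $N_d\le n$. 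This construction also recovers Theorem \ref{thm:mdpoly} (total-degree spaces) and Theorem \ref{thm:mdpoly_tensor} (tensor product $Q_N^d$) as the special downward closed cases $\#\Lambda=C^{n+d}_d$ and $\#\Lambda=(N{+}1)^d$.

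The main obstacle I anticipate is controlling the \emph{overhead} of layer alignment and variable forwarding, not the productive nodes. Each peeling inserts identity chains whose length equals the depth of the current stage, and one must check these do not accumulate beyond $\mathcal{O}(n)$. This is where downward-closedness is used a second time: since every axis monomial $N_i e_i$ lies in $\Lambda$, one has the elementary bound $n\ge 1+\sum_i N_i\ge 1+\sum_i\log_2 N_i$ (and, after discarding any coordinate with $N_i=0$, also $n\ge 1+d$), so the padding at each stage is dominated by $n$ and is in fact lower order in $N$ for fixed $d$; I would carry out this accounting slice-by-slice using the effective dimension of each $g_k$ so that constant factors $g_k$ cost $\mathcal{O}(1)$. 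The second point needing care is verifying that turning the coefficients $g_k$ into inputs inflates only the first layer of the Theorem \ref{thm:s2Pn1d} network, and only by a constant factor (as in the passage from \eqref{eq:xi1j} to its modified form), leaving the overall $\mathcal{O}(n)$ scaling intact.
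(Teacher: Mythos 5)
Your proposal follows essentially the same route as the paper's proof: induction on $d$, peeling off the last coordinate so that $f=\sum_{k=0}^{N_d}g_k(\bx')x_d^k$ with each slice index set again downward closed and the slices partitioning $\Lambda$, then combining via the univariate construction of Theorem \ref{thm:s2Pn1d} with its first layer modified as in Theorem \ref{thm:mdpoly} to accept the $g_k$ as inputs. Your treatment is in fact more careful than the paper's on two points it leaves implicit — the explicit accounting $\sum_k n_k=n$ and the cost of the identity chains used for layer alignment — but the decomposition and the depth/size bookkeeping are the same.
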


\begin{proof}
	The proof is similar to Theorem \ref{thm:mdpoly}. First,
	$f$ can be written as a linear combination of monomials.
	\begin{equation}
	f(\bx) = \sum_{\bm{k}\in \chi_C} a_{\bm{k}} \bx^{\bm{k}},
	\end{equation}
	where $\chi_C$ is the index set of $P_C$ with cardinality
	$n$. Then we rearrange the summation as
	\begin{equation}
	f(\bx) = \sum_{k_d=0}^{N_d}  
	a_{k_d}^{x_1,\ldots, x_{k_{d\!-\!1}}}
	x_d^{k_d},
	\quad
	a_{k_d}^{x_1,\ldots,x_{k_{d\!-\!1}}}
	:=
	\sum_{(k_1,\ldots, k_{d\!-\!1})\in \chi_C^{k_d}} 
	a_{k_1,\ldots, k_{d\!-\!1}, k_d} x_1^{k_1} \cdots x_{d\!-\!1}^{k_{d\!-\!1}},
	\end{equation}
	where $\chi_C^{k_d}$ are $d-1$ dimensional downward closed
	index sets that depend on the index $k_d$.  If each
	$a_{k_d}^{x_1,\ldots,x_{k_{d-1}}}$, $k_d=0,1, \ldots, N_d$
	can be exactly represented by a $\sigma_2$ network with no
	more than $\sum_{i=1}^{d-1}\lfloor\log_{2}
	N_i\rfloor+(d-1)$ hidden layers, no more than
	$\mathcal{O}(\text{Card}(\chi_C^{k_d}))$ nodes and nonzero
	weights, then $f(x)$ can be exactly represented by a
	$\sigma_2$ neural network with no more than
	$\sum_{i=1}^d\lfloor\log_{2} N_i\rfloor+d$ hidden layers,
	no more than $\mathcal{O}(n)$ nodes and nonzero weights,
	since the operation $\sum_{k_d=0}^{N_d}
	a_{k_d}^{x_1,\ldots,x_{k_{d-1}}} x_d^{k_d}$ can be
	realized exactly by a $\sigma_2$ network with
	$\lfloor\log_{2} N_d\rfloor+ 1$ hidden layers and no more
	than $\mathcal{O}(N_d)$ nodes and nonzero weights.  So, by
	mathematical induction, we only need to prove that when
	$d=1$ the theorem is satisfied, which is true by Theorem
	\ref{thm:s2Pn1d}.
\end{proof}

\begin{remark}
	\label{rmk:spg_hyperbolic_ReQU}
	According to Theorem \ref{thm:Pcomplete_ReQU}, we have
	that:
	\begin{enumerate}
		\item[1)] For any $f\in X^d_N$, there exists a ReQU
		network with no more than $d\lfloor\log_{2} N\rfloor+d$
		hidden layers, no more than $\mathcal{O}(N(\log
		N)^{d-1})$ neurons and nonzero weights, that can
		represent $f$ with no error.
		
		\item[2)] For any $f\in X^d_{N,\gamma}$ with $0<\gamma<1$,
		there exists a ReQU network having no more than
		$d\lfloor\log_{2} N\rfloor+d$ hidden layers, no more
		than $\mathcal{O}(N)$ neurons and nonzero weights, that
		can represent $f$ with no error.
		
		\item[3)] For any $f\in V^q_d$, there exists a ReQU
		network having no more than $d(q-d+2)$ hidden layers, no
		more than $\mathcal{O}(2^{q}q^{d-1})$ neurons and
		nonzero weights, that can represent $f$ with no error.
	\end{enumerate} 
\end{remark}

Combining the results in Remarks
\ref{rmk:spg_hyperbolic_ReQU} with
\eqref{eq:hyperbolic_error}, \eqref{eq:opt_hc_approx} and
\eqref{eq:spg_interp_error}, we obtain the following
theorem.
\begin{theorem}
	\label{thm:MdsparseS2error}
	We have following results for ReQU network approximation
	of functions in
	$\mathcal{K}^{m}_{\bm{\alpha},\bm{\beta}}(I^d)$, $\bm{\alpha,\beta}\in(-1,\infty)^{d}$,
	$m\ge 1$ and $F^k_d(I^d)$, $k\ge 1$:
	\begin{enumerate}
		
		\item[1)] For any function $u\in
		\mathcal{K}^{m}_{\bm{\alpha},\bm{\beta}}(I^d)$, $m\ge
		1$ with
		$|u|_{\mathcal{K}^{m}_{\bm{\alpha},\bm{\beta}}} \le
		1/D_1$, any $\varepsilon > 0$, there exists a ReQU
		network $\Phi_\varepsilon^u$ with no more than $
		\frac{d}{m} \log_2 \frac{1}{\varepsilon} + d$ hidden
		layers, no more than
		$\mathcal{O}\big(\varepsilon^{-1/m}(\frac{1}{m} \log
		\frac{1}{\varepsilon})^{d-1}\big)$ nodes and nonzero
		weights, such that
		\begin{equation}
		\| R_{\sigma_2} (\Phi_\varepsilon^u) - u \|_{\omega^{\bm{\alpha},\bm{\beta}}}
		\le \varepsilon.
		\end{equation}
		
		\item[2)] For any function $u\in
			\mathcal{K}^{m}_{\bm{\alpha},\bm{\beta}}(I^d)$, $m\ge 1$
			with $|u|_{\mathcal{K}^{m}_{\bm{\alpha},\bm{\beta}}} \le
			1/D_2$, any $\varepsilon > 0$, $0<\gamma<1$, there exists a ReQU network
			$\Phi_\varepsilon^u$ with no more than $
			\frac{d}{m(1-\gamma(1-\frac{1}{d}))} \log_2
			\frac{1}{\varepsilon} + d$ hidden layers, no more than
			$\mathcal{O}\big(\varepsilon^{-1/[m(1-\gamma(1-\frac{1}{d}))]}\big)$
			nodes and nonzero weights, such that
			\begin{equation}
			\| R_{\sigma_2} (\Phi_\varepsilon^u) - u \|_{\omega^{\bm{\alpha},\bm{\beta}}}
			\le \varepsilon.
			\end{equation}	
		\item[3)] For any function $f\in F^k_d(I^d)$, $k\ge 1$ with
		$\|f\|_{F^k_d} \le 1$, any $\varepsilon >0$, there exists
		a ReQU network $\Psi_\varepsilon^f$ with no more than
		$\mathcal{O}\left( \frac{d}{k} \log_2
		\frac{1}{\varepsilon} + d\right)$ hidden layers, no more
		than
		$\mathcal{O}\big(\varepsilon^{-\frac{1+\delta}{k}}(\frac{1+\delta}{k}
		\log_2 \frac{1}{\varepsilon})^{d-1}\big)$ nodes and
		nonzero weights, such that
		\begin{equation}
		\| R_{\sigma_2} (\Psi_\varepsilon^f) - f \|_{L^\infty} \le \varepsilon,
		\end{equation}
		where $\delta>0$ can be taken very close to $0$ for small
		enough $\varepsilon$.
	\end{enumerate}
\end{theorem}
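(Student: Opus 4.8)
The plan is to reduce all three claims to a single mechanical recipe: choose the truncated polynomial approximation whose error falls below $\varepsilon$, then realize it \emph{exactly} by a ReQU network of matching size. The exact-realization half is already available, since Remark \ref{rmk:spg_hyperbolic_ReQU} (a direct consequence of Theorem \ref{thm:Pcomplete_ReQU}) tells me that any element of $X_N^d$, of $X_{N,\gamma}^d$, or of $V_d^q$ is represented with no error by a $\sigma_2$ network whose depth and size I can read off directly. So for each part I only need to (i) pick the truncation parameter so that the relevant projection or interpolation error is at most $\varepsilon$, and (ii) substitute that choice into the known cardinality and depth bounds.

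For Part 1, I would take $f=\pi_{N,H}^{\bm{\alpha},\bm{\beta}}u\in X_N^d$ and set $R_{\sigma_2}(\Phi_\varepsilon^u):=f$. Estimate \eqref{eq:hyperbolic_error} with $\bm{l}=0$ gives $\|f-u\|_{\omega^{\bm{\alpha},\bm{\beta}}}\le D_1 N^{-m}|u|_{\mathcal{K}^m_{\bm{\alpha},\bm{\beta}}}\le N^{-m}$ under the hypothesis $|u|_{\mathcal{K}^m_{\bm{\alpha},\bm{\beta}}}\le 1/D_1$, so $N=\varepsilon^{-1/m}$ forces the error below $\varepsilon$. Feeding this $N$ into the cardinality $\mathcal{O}(N(\log N)^{d-1})$ and the depth $d\lfloor\log_2 N\rfloor+d$ from Remark \ref{rmk:spg_hyperbolic_ReQU}(1) reproduces exactly the stated $\mathcal{O}(\varepsilon^{-1/m}(\frac1m\log\frac1\varepsilon)^{d-1})$ size and $\frac{d}{m}\log_2\frac1\varepsilon+d$ depth. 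Part 2 is identical in spirit, now using the optimized-space estimate \eqref{eq:opt_hc_approx}: solving $D_2 N^{-m(1-\gamma(1-1/d))}\le\varepsilon$ under $|u|_{\mathcal{K}^m_{\bm{\alpha},\bm{\beta}}}\le 1/D_2$ gives $N=\varepsilon^{-1/[m(1-\gamma(1-1/d))]}$, and since $\mathrm{Card}(X_{N,\gamma}^d)=C(\gamma,d)N$ by \eqref{eq:card_opt_hc_space} the logarithmic factor drops out, yielding the claimed $\mathcal{O}(N)$ size and the depth $\frac{d}{m(1-\gamma(1-1/d))}\log_2\frac1\varepsilon+d$.

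Part 3 is where the genuine work sits, because the sparse-grid interpolation error \eqref{eq:spg_interp_error} carries an extra logarithmic factor absent from the hyperbolic-cross estimates. Writing $N=2^q$ so that the sparse-grid cardinality is $n=\mathcal{O}(N(\log_2 N)^{d-1})$ and the first form of \eqref{eq:spg_interp_error} reads $\le c_{d,k}N^{-k}(\log_2 N)^{2d-1}$, I want $N$ large enough that this is $\le\varepsilon$. The key elementary observation is that for any fixed $\delta>0$ and any fixed power $C$ one has $\varepsilon^{\delta}(\log\frac1\varepsilon)^{C}\to 0$ as $\varepsilon\to 0$; hence taking $N=\mathcal{O}(\varepsilon^{-(1+\delta)/k})$ makes the error $\le c_{d,k}\varepsilon^{1+\delta}(\tfrac{1+\delta}{k}\log_2\tfrac1\varepsilon)^{2d-1}\le\varepsilon$ for $\varepsilon$ small enough. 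Substituting this $N$ into $n=\mathcal{O}(N(\log_2 N)^{d-1})$ gives the advertised node count $\mathcal{O}(\varepsilon^{-(1+\delta)/k}(\frac{1+\delta}{k}\log_2\frac1\varepsilon)^{d-1})$, while the depth $d(q-d+2)$ from Remark \ref{rmk:spg_hyperbolic_ReQU}(3) becomes $\mathcal{O}(\frac{d}{k}\log_2\frac1\varepsilon+d)$ after absorbing the constant $1+\delta$. Setting $R_{\sigma_2}(\Psi_\varepsilon^f):=\mathcal{A}(q,d)f\in V_d^q$ then closes the argument. The only real obstacle is this log-factor bookkeeping; everything else is a substitution of the optimal truncation level into bounds already established. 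I would stress that the $\delta$ is not an artifact of the network construction but is inherited from the approximation theory, since the best sparse-grid interpolation bound in \eqref{eq:spg_interp_error} is itself only $\mathcal{O}(n^{-k})$ up to logarithms.
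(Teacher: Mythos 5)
Your proposal is correct and follows exactly the route the paper intends: the paper gives no separate proof beyond the sentence ``Combining the results in Remarks \ref{rmk:spg_hyperbolic_ReQU} with \eqref{eq:hyperbolic_error}, \eqref{eq:opt_hc_approx} and \eqref{eq:spg_interp_error}, we obtain the following theorem,'' and your write-up is precisely that combination, with the truncation levels $N=\varepsilon^{-1/m}$, $N=\varepsilon^{-1/[m(1-\gamma(1-1/d))]}$ and $N=\mathcal{O}(\varepsilon^{-(1+\delta)/k})$ substituted into the depth and cardinality bounds. Your explicit handling of the logarithmic factor in Part 3, explaining where the $\delta$ comes from, supplies detail the paper leaves implicit but is entirely consistent with it.
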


\begin{remark}
	Taking $m=2$ in Theorem \ref{thm:MdsparseS2error}, we obtain
	the following result: For any function $u\in
	\mathcal{K}^{2}_{\bm{\alpha},\bm{\beta}}(I^d)$, with
	$|u|_{\mathcal{K}^{2}_{\bm{\alpha},\bm{\beta}}} \le 1/D_1$,
	and $\varepsilon>0$ there exists a ReQU network
	$\Phi_\varepsilon^u$ with no more than $ \frac{d}{2} \log_2
	\frac{1}{\varepsilon} + d$ hidden layers, no more than
	$\mathcal{O}\big(\varepsilon^{-1/2}(\frac{1}{2} \log
	\frac{1}{\varepsilon})^{d-1}\big)$ nodes and nonzero
	weights, that approximates $u$ with a tolerance
	$\varepsilon$.  A result of using ReLU networks
	approximating similar functions is recently given by
	Montanelli and Du \cite{montanelli_deep_2017}. To
	approximate a function in
	$\mathcal{K}^{2}_{\bm{\alpha},\bm{\beta}}(I^d)$ with
	tolerance $\varepsilon$, they constructed a ReLU network
	with $\mathcal{O}(|\log_2 \varepsilon| \log_2 d )$ layers
	and $ \mathcal{O}(\varepsilon^{-\frac{1}{2}} |\log_2
	\varepsilon|^{\frac{3}{2}(d-1)+1} \log_2 d)$ nonzero
	weights. Comparing the two results, we find that, while the
	number of layers required by ReQU networks might be larger
	than ReLU networks, the overall complexity of the ReQU
	network is $|\log_2 \varepsilon |^d$ times smaller than that
	of ReLU network.
\end{remark}

\begin{remark}
	When one use optimized hyperbolic cross polynomial
	approximation for functions in
	$\mathcal{K}^{m}_{\bm{\alpha},\bm{\beta}}(I^d)$, with
	$|u|_{\mathcal{K}^{m}_{\bm{\alpha},\bm{\beta}}} \le 1/D_2$,
	the exponential growth on $d$ with a base related to
	$1/\varepsilon$ in the required ReQU network size is
	removed. Thus, in this case it seems that the curse of
	dimensionality does not exist any more. But we note that,
	the constant $D_2$ and the implicit constant hidden in the
	big $\mathcal{O}$ notation, still depend on $d$. In 
	practice, the error bound given by the second case may
	not be better than the first case.
\end{remark}

\section{Some preliminary numerical results}

In this section, we present some numerical results to verify
that the construction algorithms proposed are numerically
stable and efficient.  We first present the results of
representing univariate monomials in Table
\ref{tb:Monomial}. The maximum norm error in this table is
calculated by taking the maximum difference on 100 randomly
choose points in $[-1,1]$. The results show that the ReQU network we
constructed can achieve machine accuracy, which means our
approach is numerically stable.

\begin{table}[!htbp]
	\caption{Representation of monomials $x^n$. }
	\label{tb:Monomial}
	\centering
	\renewcommand{\arraystretch}{1.1}
	\begin{tabular}{rcccc}
		\hline
		Degree $n$ & $L$ &  $\#\mbox{weight}$ & $\#\mbox{node}$ & $L^{\infty}$-Error\\
		\hline
		3        & 3 &  38        & 10		 & 4.44e-16\\
		7        & 4 &  64        & 15		 & 2.22e-16\\
		15       & 5 &  89        & 20       & 9.99e-16\\
		31       & 6 &  114       & 25       & 7.77e-16\\
		63       & 7 &  139       & 30       & 6.11e-16\\
		127      & 8 &  164       & 35       & 2.22e-16\\	
		\hline 
	\end{tabular}
\end{table}	

Similar results for representing univariate polynomials are
given in Table \ref{tb:Poly-1D}.  Here, the coefficients of
the power series are generated randomly according to
standard normal distribution. These results also verify our
approach is stable and efficient.

\begin{table}[!htbp]
	\caption{Representation of univariate polynomials of degree $n$. 
	}
	\label{tb:Poly-1D}
	\centering
	\renewcommand{\arraystretch}{1.1} 
	\begin{tabular}{rcccc}
		\hline
		Degree $n$ & $L$ &  $\#\mbox{Weight}$ & $\#\mbox{Node}$ & $L^{\infty}$-Error\\
		\hline
		3        & 3 &  66         & 14       & 1.78e-15\\
		7        & 4 &  188        & 31		  & 1.78e-15\\
		15       & 5 &  429        & 64       & 4.44e-15\\
		31       & 6 &  910        & 129      & 5.33e-15\\
		63       & 7 &  1871       & 258      & 5.33e-15\\
		127      & 8 &  3792       & 515      & 5.33e-15\\		
		\hline 
	\end{tabular}
\end{table}	

Numerical tests for 2-dimensional polynomials in
tensor-product space and hyperbolic cross space are
presented in Tables \ref{tb:2D-1} and \ref{tb:2D-2},
respectively. The coefficients of corresponding power series
are all randomly generated according to standard normal
distribution. The results verify the stability and
efficiency of our method.

\begin{table}[!htbp]
	\caption{Representation of polynomials in tensor-product space $Q_N^2$.}
	\label{tb:2D-1}
	\centering
	\renewcommand{\arraystretch}{1.1}
	\begin{tabular}{rcccc}
		\hline
		Degree $N$ & $L$ &  $\#\mbox{Weight}$ & $\#\mbox{Node}$  & $L^{\infty}$-Error\\
		\hline
		3      & 5 &  378        & 64        & 1.11e-15\\
		7      & 7 &  1570       & 246       & 8.88e-15\\
		15     & 9 &   6376      & 988       & 1.60e-14\\
		31     & 11&  25758      & 4002      & 7.11e-14 \\
		63     & 13&  103668     & 16168     & 8.88e-14 \\
		\hline
	\end{tabular}
\end{table}

\begin{table}[!htbp]
	\caption{Representation of polynomials in hyperbolic cross polynomial space.}
	\label{tb:2D-2}
	\centering
	\renewcommand{\arraystretch}{1.1} 
	\begin{tabular}{rcccc}
		\hline
		Degree $N$ & $L$ &  $\#\mbox{Weight}$ & $\#\mbox{Node}$  & $L^{\infty}$-Error\\
		\hline
		7        & 7 &	1254       & 217       & 3.55e-15\\
		15       & 9 &  3277       & 554       & 1.24e-14\\
		31       & 11&  8022       & 1351       & 5.32e-14\\
		63       & 13&  19039       & 3196       & 2.24e-14\\
	   127       & 15&  44052       & 7393       & 4.26e-14\\
		\hline	
	\end{tabular}
\end{table}

Next, we present some results of approximated 1-dimensional
and 2-dimensional smooth functions using our approach, and
compare them with trained ReLU network approximations.  We
first show the results of approximating $\sin(x)$ using
ReQU network of our approach and ReLU network with randomly
initialized coefficients. The ReQU network is constructed
using proposed method based on a polynomial approximation of
degree $8$ and then trained by gradient descent method.  The
result is shown in the left plot of Fig. \ref{fig:sin1d}.
For the ReLU network approximation, we take 5 ReLU networks
with same structure (8 layers of hidden nodes with each
layer has 64 ReLU nodes, full connected) are trained using
mini-batch stochastic gradient descent method. The best
result among the 5 ReLU networks is shown in the right plot
of Fig. \ref{fig:sin1d}. Note that the number of hidden
nodes used by the ReQU network is less than $64$, and it
give much better results than the trained ReLU network. By
training the constructed ReQU network, the approximation
error can be further reduced.  Similar results for
approximating 2-dimensional function $\sin(x) \sin(
y)$ are presented in Fig. \ref{fig:sin2d}.

\begin{figure}[!htbp]
	\centering
	\includegraphics[width=0.47\textwidth]{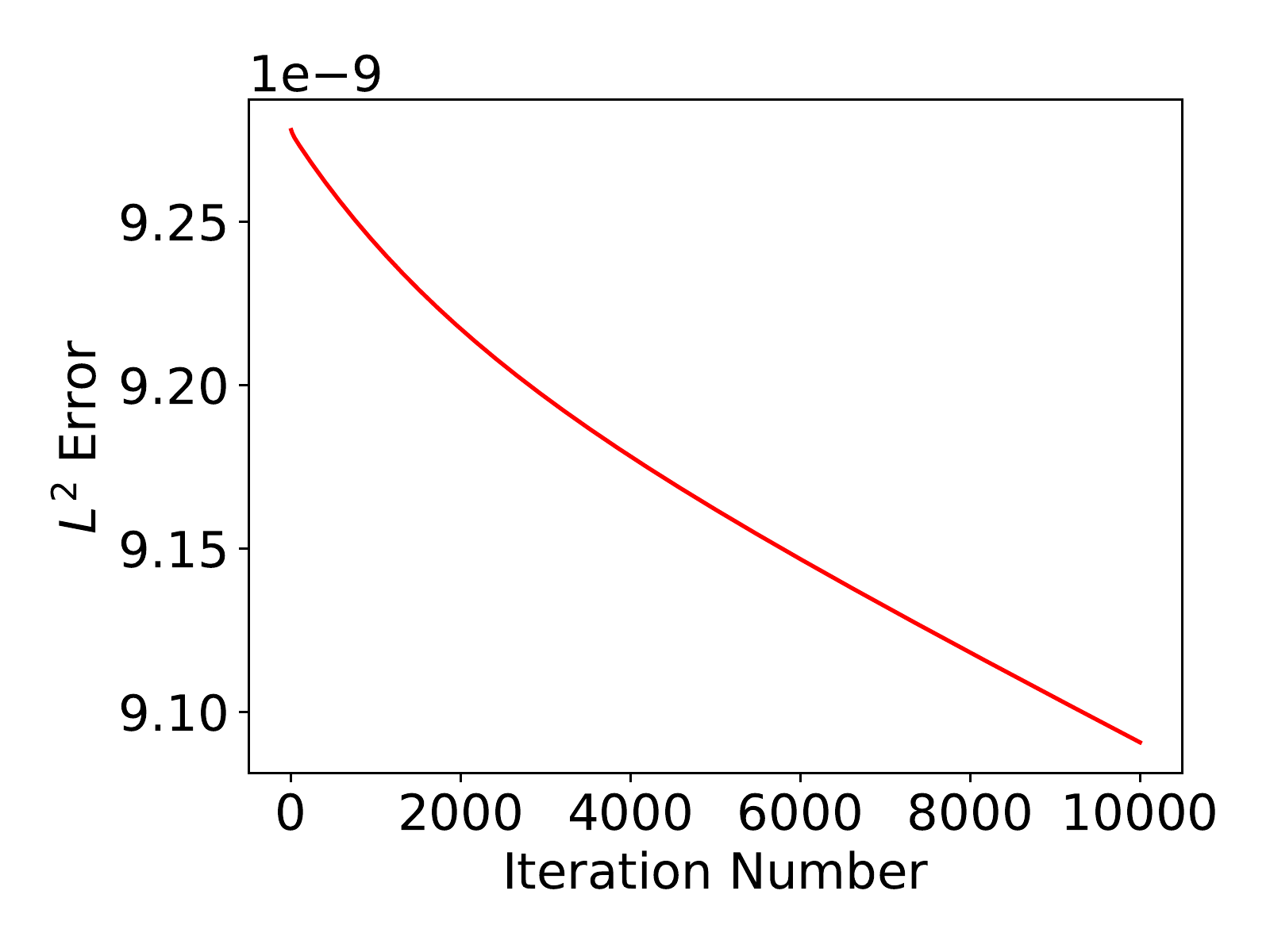}
	\includegraphics[width=0.45\textwidth]{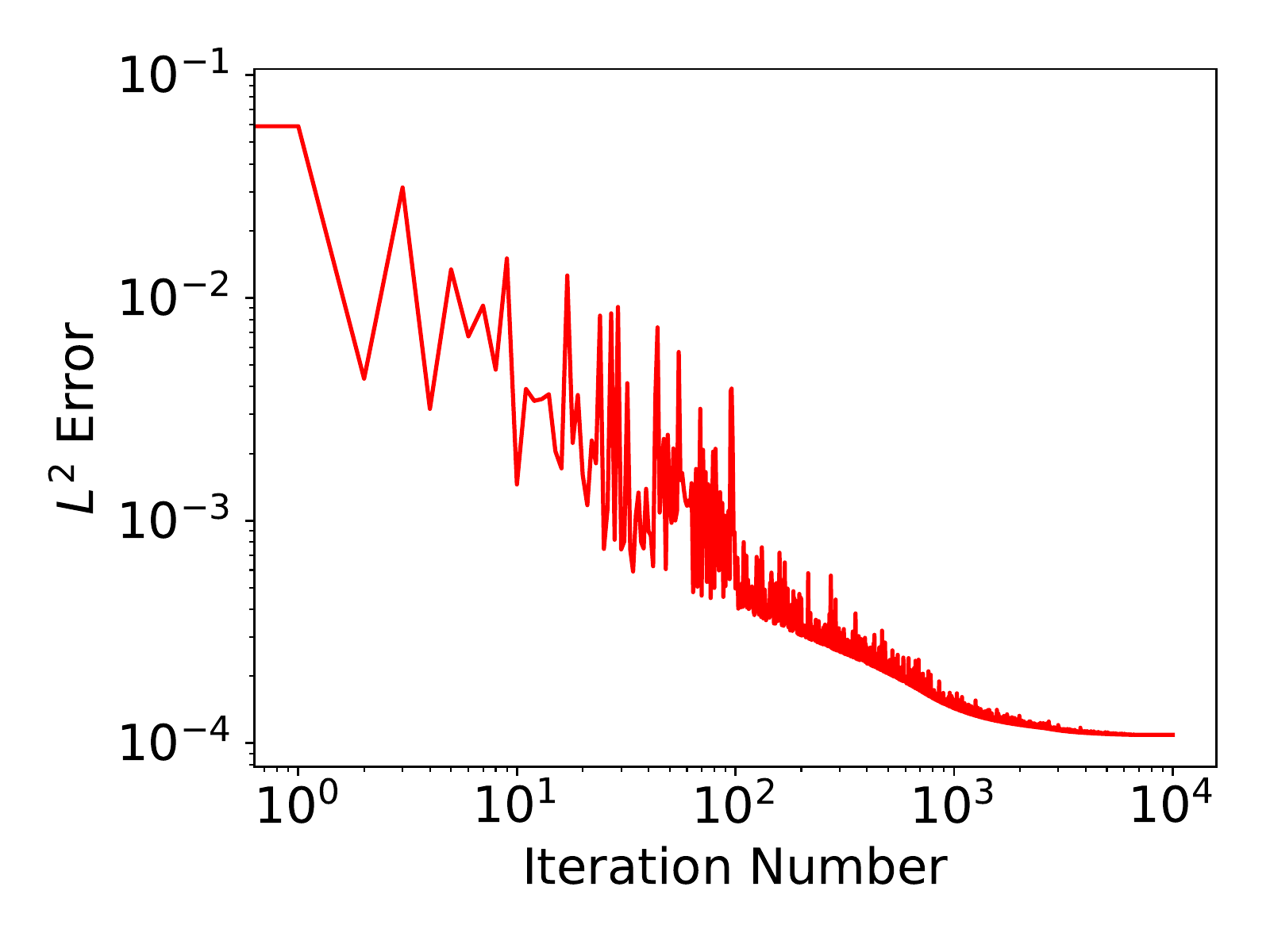}
	\caption{Approximating $\sin(x)$ function using ReQU
		and ReLU neural networks. Left: result of ReQU network
		initialized by polynomials of degree $8$ and then
		trained by a gradient descent method.  Right: result
		of ReLU network (8 fully connected hidden layers with each one has 64 ReLU nodes) with a random initialization
		and trained by a mini-batch gradient descent method.}
	\label{fig:sin1d}
\end{figure}   
\begin{figure}[!htbp]
	\centering
	\includegraphics[width=0.47\textwidth]{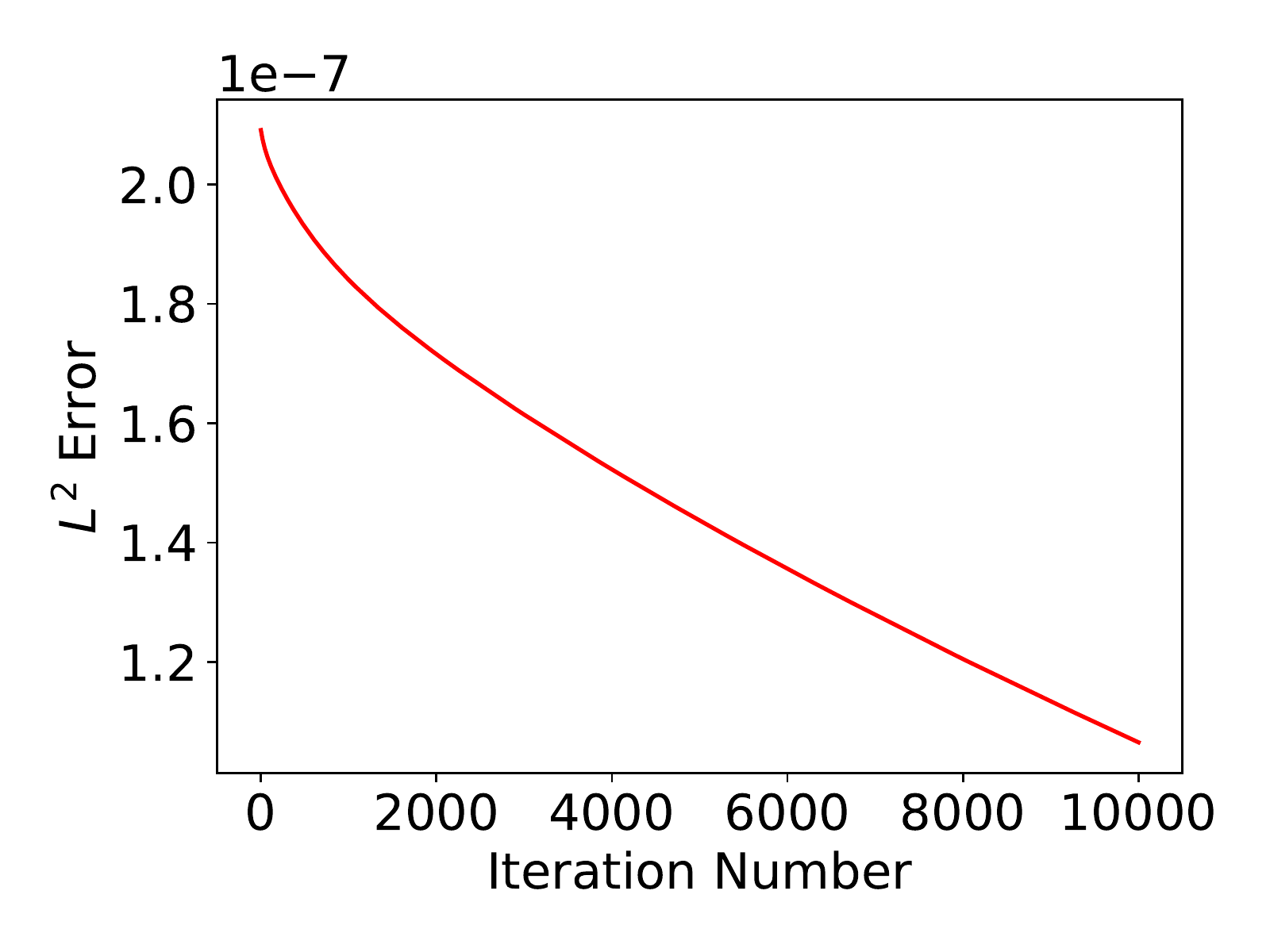}
	\includegraphics[width=0.45\textwidth]{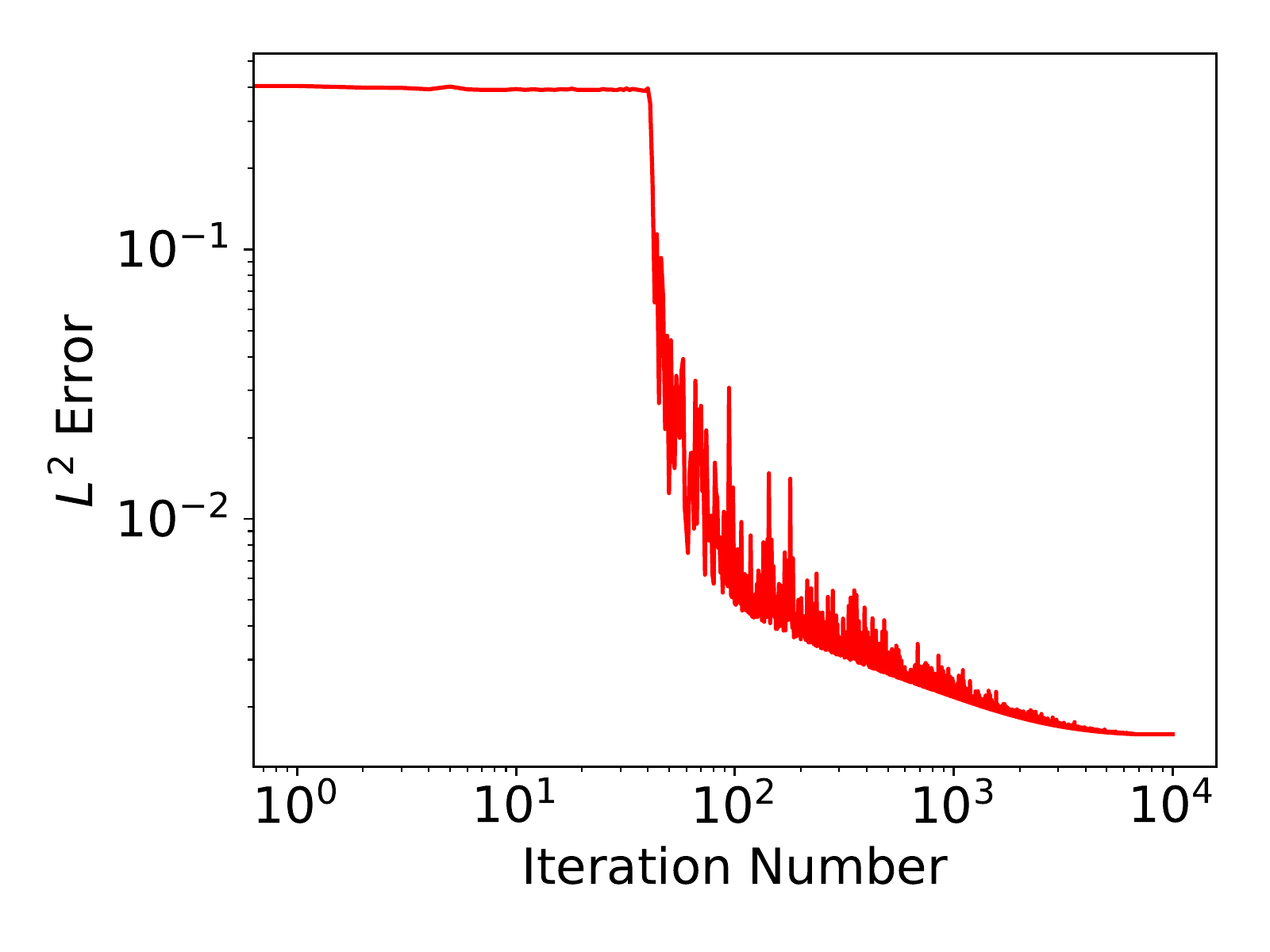}
	\caption{Approximating $\sin(x)\sin(y)$ using
		ReQU and ReLU neural networks. Left: result of ReQU
		network initialized by a 2-d polynomial in tensor-product space $Q_N^2 (N=9)$
		and then trained by a gradient descent method; Right:
		result of ReLU network (8 fully connected hidden layers, each one has 128 ReLU nodes) with a random
		initialization and then trained by a mini-batch
		gradient descent method.}
	\label{fig:sin2d}
\end{figure}    

\section{Conclusion and future work}

In this paper, we gave constructive proofs of some error bounds
for approximating smooth functions by deep neural networks
using RePU function as the activation functions.  The proofs
rely on the fact that polynomials can be represented by RePU
networks with no approximation error. We construct several
optimal algorithms for such representations, in which
polynomials of degree no more than $n$ are converted into a
ReQU network with $\mathcal{O}(\log_2 n)$ layers, and the
size of the network is of the same scale as the dimension of the polynomial space to be approximated. Then by using the
classical polynomial approximation theory, we obtain upper error bounds for ReQU networks approximating smooth functions,
which show clear advantages of using ReQU activation
function, comparing to the existing results for ReLU
networks. In general, the ReLU network required to
approximate a sufficient smooth function, is $\mathcal{O}(\log \frac1\varepsilon)$ times
larger than the corresponding ReQU network. Here
$\varepsilon$ is the approximation error. To achieve
$\varepsilon$-approximation for $f\in
B^{\infty}_{\alpha,\beta}$, the number of layer of ReQU
network required to obtain this approximation is
$\mathcal{O}(\log_2 \log\frac{1}{\varepsilon})$, while the
corresponding best known results is
$\mathcal{O}(\log\frac{1}{\varepsilon})$ for ReLU network.
For high dimensional functions with bounded mixed
derivatives, we give error bounds that have a weaker
exponentially dependence on $d$, by using hyperbolic
cross/sparse grid spectral approximation, in particular if
optimized hyperbolic cross polynomial projections are used,
there is no term related to $\varepsilon$ is exponentially dependent on $d$. Since only global polynomial approximations are considered in this paper, the results obtained also hold for deep
networks with non-rectified power units. The use of
rectified units gives the neural network the ability to
approximate piecewise smooth functions efficiently, which
will be analyzed in a separate paper.

Our constructions of RePU network also reveal the close
relation between the depth of the RePU network and the
``order'' of polynomial approximation.
The advantage of using {\em deep} over {\em shallow} neural
ReQU networks is clearly shown by our constructive proofs:
by using one hidden layer, a ReQU network can only represent
piecewise quadratic polynomials; by using $n$ hidden layers, a ReQU
network can represent piecewise polynomials of degree up to
$\mathcal{O}(2^n)$.  The ReQU networks we built for
approximating smooth functions all have a tree-like
structure, and are sparsely connected. This may give some
hints on how to design appropriate structures of neural
networks for some practical applications.

We have shown theoretically that for approximating
sufficient smooth functions, ReQU networks are superior to
ReLU networks in terms of approximation error. We also
present efficient and stable algorithm to construct ReQU
network based on polynomial approximation. Our preliminary
results demonstrated that our constructions are numerically
stable and efficient. The constructed neural network can be
regarded as a good initial of RePU network and further
trained to get better results.  For low dimensional
problems, this approach is much more accurate than the
results obtained by direct training a randomly initialized ReLU
neural networks.

In practical applications, the functions to be approximated
may have different kinds of non-smoothness, which are
problem dependent. The training method is another key factor
that affects the application of neural networks. We
will continue our study in these directions. In particular,
we will study the approximation error of piecewise smooth
functions with deep ReQU networks, and investigate whether
those popular training methods proposed to train ReLU
networks are efficient for training RePU
networks. Meanwhile, we will try deep RePU networks on some
practical problems where the underlying functions are
smooth, e.g.  minimum action methods for large PDE
systems\cite{wan_dynamicsolverconsistent_2017}, PDEs with
random coefficients\cite{musharbash_error_2015}, and moment
closure problem in complex fluid
\cite{yu_nonhomogeneous_2010} and turbulence
modeling\cite{mellor_development_1982}, etc.

\section*{Acknowledgments}
We are indebted to Prof. Jie Shen and Prof. Li-Lian Wang for
their stimulating conversations on spectral methods. We would like also
to think Prof. Christoph Schwab and Prof. Hrushikesh N. Mhaskar for providing us some related references. This work was partially supported by
China National Program on Key Basic Research Project 2015CB856003, NNSFC
Grant 11771439, 91852116, and China Science Challenge Project,
no. TZ2018001.
The computations were  performed on the PC clusters of
State Key Laboratory of Scientific and Engineering Computing of Chinese Academy of Sciences.

\bibliographystyle{unsrt}
\bibliography{sgDNN}

\end{document}